\newtheorem{thrm}{Theorem}[section]
\newtheorem{lemma}[thrm]{Lemma}
\newtheorem{cor}[thrm]{Corollary}
\newtheorem{prop}[thrm]{Proposition}
\theoremstyle{definition}
\newtheorem{defn}[thrm]{Definition}
\begin{document}

\title{\bf Some complementary Gray codes}

\author{\Large Adam Hoyt \\ 
18-7825 Avenue Mountain Sights \\
Montr\'{e}al QC H4P 2B1, Canada\\
{\tt ahoyt@connect.carleton.ca} \\
\and
\Large Brett Stevens\footnote{Supported by NSERC, CFI, OIT and the Ontario MRI.} \\ School of Mathematics and Statistics\\ Carleton University \\ 
1125 Colonel By Drive \\ 
Ottawa ON K1S 5B6, Canada \\
{\tt brett@math.carleton.ca}}

\date{}

\maketitle

\begin{abstract}
A {\em complementary} Gray code for binary $n$-tuples is one that, when all the tuples are complemented, is identical to itself; this is equivalent to the complement of the first half of the code being identical to the second half.  We generalize the notion of complementary to $q$-ary $n$-tuples, fixed size combinations of an $n$-set and permutations and, in each case, construct complementary Gray codes.  We relax, as weakly as possible, the notions of complementary to cases where necessary conditions for existence are violated and construct Gray codes within the weakened definitions: these include binary $n$-tuples when $n$ is odd and Lee metric $q$-ary $n$-tuples when $n$ is odd and $q$ is even.  Finally a lemma used in the construction for permutations offers the first known cyclic Gray code for the permutations of a particular family of multisets.

\end{abstract}

\section{Introduction} \label{intro}
A {\em Gray code} for binary $n$-tuples is a linear ordering of the tuples so that consecutive tuples differ in exactly one bit position.  It is {\em cyclic} if the first and last tuples also differ in only one bit position.  Binary $n$-tuples can be associated with subsets of an $n$-set by considering the tuple to be the incidence vector of the subset.  In this context, the {\em complement} of a tuple, $w$, is the incidence vector of the complement of the subset corresponding to $w$; it can be directly constructed by swapping the positions of all 0s and 1s.  We denote the complement of $w$ by $\overline{w}$. 

A ``complementary'' Gray code is a Gray code where the complement, $\overline{w}$, of any word, $w$, occurs exactly half way through the code after $w$ appears.  The existence of complementary Gray codes for binary $n$-tuples for $n$ even is well known, although we are unaware of its original publication \cite{MR2251595}.  When $n$ is even we construct such a complementary Gray code as follows: let $P$ be any Hamilton path from $00\cdots0$ to $11\cdots1$ in the binary $(n-1)$-dimensional hypercube.  Then \label{page:n_even}
\[
0P,1\overline{P}
\]
gives a complementary Gray code.  We use $\overline{P}$ to denote the complement of the words in $P$ in the same order \cite{MR2251595}.  Similarly, when we use $P$ with any operation we mean to apply that operation to each element of $P$ and leave the resulting elements in the same order as from $P$.  For example. $P + x$ indicates adding $x$ to every element of $P$ and keeping the resulting objects in the same order as $P$.  $0P$ means prefixing every element of $P$ with a 0.

There are several ways to make the necessary ingredient, the required Hamilton path, for this construction.  For instance, Frank Ruskey suggests the following construction:
\begin{eqnarray*}
E(n)&:&0^n  \rightarrow 1^n \\
O(n)&:&0^n \rightarrow 01^{n-1} \\
E(n+1) &=& O(n)0, BRRC(n)1 \\
O(n+1) &=& E(n)0,BRC(n)1
\end{eqnarray*}
where $BRC(n)$ is the complement of Gray's original binary reflected Gray code and $BRRC(n)$ is the reversal of the complement of the original binary reflected Gray code \cite{ruskey:pc:2008}. The monotone Gray codes constructed by Savage and Winkler are also suitable ingredients for the construction \cite{MR1329390}.  For example, Table~\ref{comp1} shows a complementary binary Gray code with $4$ bits.
\begin{table}[ht]
\begin{center}
\begin{tabular}{llllllll}
0000 & 0001 & 0011 & 0010 & 0110 & 0100 & 0101 & 0111 \\
1111 & 1110 & 1100 & 1101 & 1001 & 1011 & 1010 & 1000
\end{tabular}
\end{center}
\caption{ A complementary binary Gray code with 4 bits. \label{comp1}}
\end{table}

We call the parity of the number of $1$s in a binary $n$-tuple, $w$, the {\em parity} of $w$.  Since any binary Gray code changes precisely one bit between every pair of consecutive words, the parity of the words alternates between even and odd number.  When $n \geq 2$, the pair of words at distance $2^{n-1}$ in the code must have the same parity since the distance between them is even. However, when $n$ is odd, complementary words have opposite parity.  Thus a complementary $n$-bit binary Gray code cannot exist when $n$ is odd.  In this paper we will suitably define and construct complementary Gray codes for binary $n$-tuples for $n$ odd, $q$-ary $n$-tuples, combinations of an $n$-set and permutations of $n$-objects.  When the existence of a code with a strict definition of ``complementary'' is forbidden by necessary conditions, we will relax the definition as little as possible to construct a code.

We start by introducing some definitions and notations that we will use in this article.  For a word, $w \in \mathbb{Z}_q^n$, we denote the set of non-zero indices as the {\em support} of $w$ and the cardinality of this set is the word's {\em weight}.  When $q=2$ there is a bijective correspondence between words of length $n$, $w \in \mathbb{Z}_2^n$ and the subsets $S_w \subseteq \{1,2,\cdots,n\}$.  The bijection map is simply the support of the word.  For example, $001101$ corresponds to the set $\{3,4,6\}$.  There are two natural metrics that can be placed on $\mathbb{Z}_q^n$.  The {\em Hamming distance} between $v,w \in \mathbb{Z}_q^n$ is the number of entries where $v$ and $w$ differ, or equivalently, the {\em weight} of $v-w$.  Let $v=v_1,v_2,\cdots, v_n$ and $w=w_1,w_2,\cdots, w_n$.  The {\em Lee distance} in the $i$th coordinate is the size of the difference between $v_i$ and $w_i$ in $\mathbb{Z}_q$ regarded as a cycle; in other words $\min\{|v_i-w_i|,q-|v_i-w_i|\}$. The {\em Lee distance} between words is then sum of the Lee distances over all coordinates \cite{MR1664228}:
\[
\sum_{i=1}^{n} \min\{|v_i-w_i|,q-|v_i-w_i|\}.
\]

In graph theory terminology, Hamming distances correspond to distances in the graph 
\[
K_q \Box K_q \Box \cdots \Box K_q
\]
and Lee distances correspond to distances in the multi-dimensional torus
\[
C_q \Box C_q \Box \cdots \Box C_q,
\]
where vertices are labelled by length $n$ $q$-ary words.  In either case, a (cyclic) Gray code with consecutive words at distance one is equivalent to a Hamilton path (cycle) in the corresponding graph.

\section{Gray codes of $q$-ary $n$-tuples} \label{sec:qn}

We first address the problem of defining and constructing complementary Gray codes for binary $n$-tuples when $n$ is odd.

\subsection{Binary $n$-tuples for $n$ odd} \label{subsec:binary}

When $n$ is odd, there are two possible ways to relax the notion of ``complementary'': we can drop the requirement that all words appear in the complementary code or we can keep all words, but allow complementary words to be almost, rather than exactly, halfway through the code.  We show that one can construct Gray codes for all odd $n$ in either case and do so as optimally as possible: leaving out the fewest words or having complementary words be exactly one position off from halfway.

We start with a result that will be used in the construction of these Gray codes.
\begin{lemma} \label{rev_ref}
Let $G$ be a binary Gray code on $n$ bits with $G(i)$ denoting the $i$th word (taken modulo $2^n$).  Then
\[
G'(i) = \left \{ \begin{array}{ll} G(\lfloor i/2 \rfloor)0 & \mbox{if $i \equiv 0,3 \bmod 4$}  \\
G(\lfloor i/2 \rfloor)1 & \mbox{if $i \equiv 1,2 \bmod 4$} \end{array} \right .
\]
defines a binary Gray code on $n+1$ bits. $G'$ is cyclic if and only if $G$ is cyclic.
\end{lemma}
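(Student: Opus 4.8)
The plan is to read off the explicit form of $G'$ from the definition and then verify the three required properties directly. Writing out $G'$ for consecutive indices, one sees that the definition repeats each word $G(m)$ of the original code twice in a row---at positions $i=2m$ and $i=2m+1$---while the appended bit follows the periodic pattern $0,1,1,0,0,1,1,0,\dots$ as $i$ runs through the residues $0,1,2,3 \bmod 4$. Thus $G'$ has the form
\[
G(0)0,\;G(0)1,\;G(1)1,\;G(1)0,\;G(2)0,\;G(2)1,\;G(3)1,\;G(3)0,\;\dots,
\]
a ``snake'' that flips the appended bit while the prefix is held fixed, then advances the prefix while the appended bit is held fixed.

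First I would check that $G'$ enumerates all of $\mathbb{Z}_2^{n+1}$ exactly once. For each $m$ with $0 \le m \le 2^n-1$, the only indices with $\lfloor i/2\rfloor = m$ are $i=2m$ and $i=2m+1$; examining the two subcases ($m$ even and $m$ odd) shows that the appended bits at these indices are $0$ and $1$ in one order or the other, so the two words $G(m)0$ and $G(m)1$ each occur exactly once. Since $G$ is a bijection onto $\mathbb{Z}_2^n$, it follows that $G'$ is a bijection onto $\mathbb{Z}_2^{n+1}$.

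Next I would establish the single-bit transition property by a case analysis on $i \bmod 4$. When $i \equiv 0$ or $i \equiv 2 \bmod 4$, the index $i$ is even, so $\lfloor i/2\rfloor = \lfloor (i+1)/2\rfloor$ and the prefix is unchanged from $G'(i)$ to $G'(i+1)$, while the appended bit flips ($0\to 1$, resp.\ $1\to 0$), giving a single-bit change. When $i \equiv 1$ or $i \equiv 3 \bmod 4$, the appended bit is unchanged ($1\to 1$, resp.\ $0\to 0$) while $\lfloor i/2\rfloor$ increases by one, so the prefix advances from $G(m)$ to $G(m+1)$; these differ in exactly one bit because $G$ is a Gray code. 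Hence every consecutive pair of words of $G'$ is at Hamming distance one, and together with the bijection this proves $G'$ is a Gray code on $n+1$ bits.

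Finally, for cyclicity I would compute the two end words. The first is $G'(0)=G(0)0$, and since $2^{n+1}-1 \equiv 3 \bmod 4$ with $\lfloor (2^{n+1}-1)/2\rfloor = 2^n-1$, the last is $G'(2^{n+1}-1)=G(2^n-1)0$; this wraparound is precisely the transition left out of the interior case analysis. Both end in $0$, so they differ in exactly one bit precisely when $G(2^n-1)$ and $G(0)$ do, which is exactly the condition for $G$ to be cyclic. Thus $G'$ is cyclic if and only if $G$ is. I expect the only delicate point to be bookkeeping the residues mod $4$ correctly---in particular lining up the appended-bit pattern with the parity of $m$ so that both appended values genuinely appear for each prefix---rather than any conceptual difficulty.
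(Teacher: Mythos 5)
Your proof is correct: the expansion of the definition into the ``snake'' pattern $G(0)0, G(0)1, G(1)1, G(1)0, \dots$, the bijection check, the four-way case analysis on $i \bmod 4$, and the wraparound computation $G'(2^{n+1}-1) = G(2^n-1)0$ all check out. The paper actually states Lemma~\ref{rev_ref} without any proof, treating it as routine, so your argument is precisely the direct verification the authors left implicit; there is nothing to contrast it with.
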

Applying this lemma repeatedly to the 1 bit binary cyclic Gray code, \[0,1,\] yields an alternative construction of Gray's original binary reflected Gray code. We can use Lemma~\ref{rev_ref} to construct a complementary Gray code missing only two words.
\begin{thrm}
  If $n$ is odd there exists a Gray ordering of
  \[
    \mathbb{Z}_2^n\setminus\{00\cdots0,11\cdots1\}
  \]
  such that 
the distance between any pair of complementary words is $2^{n-1}-1$.
\end{thrm}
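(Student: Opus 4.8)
The plan is to first recognise what the prescribed distance forces. The code must order the $2^n-2$ words so that every word and its complement are exactly $2^{n-1}-1=\tfrac12(2^n-2)$ positions apart. Splitting the ordering into two consecutive blocks of length $2^{n-1}-1$, this says precisely that the second block is the complement, word for word and in the same order, of the first block (a first-block word cannot have its complement also in the first block, since the intra-block distance is too small). I would therefore seek the code in the form $0R,\,1\overline{R}$, where $R$ is a list of $2^{n-1}-1$ words of length $n-1$. For $0R,1\overline R$ to be a Gray ordering of $\mathbb{Z}_2^n\setminus\{0^n,1^n\}$ it suffices that $R$ be a Hamilton path of the $(n-1)$-cube with the vertex $0^{n-1}$ deleted (so $0R$ supplies every $0v$ with $v\neq 0^{n-1}$ and $1\overline R$ every $1w$ with $w\neq 1^{n-1}$, the missing words being exactly $0^n$ and $1^n$), and that the last vertex of $R$ equal the complement of its first vertex, so that across the junction $0\cdot\mathrm{last}(R)$ and $1\cdot\overline{\mathrm{first}(R)}=1\cdot\mathrm{last}(R)$ differ only in the leading bit. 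The distance claim is then automatic: the word $0R(k)$ in position $k$ of the first block has complement $1\overline{R(k)}$ in position $(2^{n-1}-1)+k$.

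The task thus reduces to producing a Hamilton path of $Q_{n-1}\setminus\{0^{n-1}\}$ between two complementary vertices, and this is where Lemma~\ref{rev_ref} enters. Since $n$ is odd, $n-2$ is odd, so $0^{n-2}$ and $1^{n-2}$ lie in opposite parity classes of the $(n-2)$-cube and there is a Hamilton path $H$ of that cube from $0^{n-2}$ to $1^{n-2}$ (for instance Ruskey's $E(n-2)$ \cite{ruskey:pc:2008}, or a Savage--Winkler monotone path \cite{MR1329390}). Applying Lemma~\ref{rev_ref} to $H$ yields a Hamilton path $H'$ of the full $(n-1)$-cube. Evaluating the $i\equiv 0,3\bmod 4$ rule at the two ends gives $H'(0)=H(0)0=0^{n-1}$ and $H'(2^{n-1}-1)=H(2^{n-2}-1)0=1^{n-2}0$, while $H'(1)=H(0)1=0^{n-2}1$. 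Hence $0^{n-1}$ is an endpoint of $H'$; deleting it leaves $R:=H'(1),\dots,H'(2^{n-1}-1)$, a Hamilton path of $Q_{n-1}\setminus\{0^{n-1}\}$ from $0^{n-2}1$ to $1^{n-2}0$, and these endpoints are complements of one another, exactly the junction condition demanded above.

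Assembling $0R,1\overline R$ then gives the required ordering, with adjacency inside the blocks inherited from $R$ and $\overline R$ being Gray paths, adjacency at the junction from the complementary endpoints, coverage as noted, and the constant complementary distance from the block/complemented-block structure. The crux of the argument is the reformulation in the first paragraph: recognising that the distance $2^{n-1}-1$ is equivalent to the clean block splitting, and arranging that the two vertices absent from $R$ are precisely $0^{n-1}$ and (via complementation) $1^{n-1}$; once the goal is phrased as a Hamilton path on the punctured cube between complementary vertices, Lemma~\ref{rev_ref} delivers it almost for free, the only real computation being the values of $H'$ at $0$, $1$ and $2^{n-1}-1$ through the $\bmod 4$ suffix rule. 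I would close with the parity check that makes the choice of endpoints legitimate: $n-1$ is even, so both $0^{n-2}1$ and $1^{n-2}0$ have odd weight, matching the bipartite constraint that a Hamilton path on the punctured $(n-1)$-cube, whose colour classes now have sizes $2^{n-2}-1$ and $2^{n-2}$, must join two vertices of the larger, odd-weight class.
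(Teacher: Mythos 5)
Your proof is correct and follows essentially the same route as the paper's: both take a Gray code on $n-2$ bits from $00\cdots0$ to $11\cdots1$ (via Ruskey's construction or a Savage--Winkler monotone code), apply Lemma~\ref{rev_ref}, delete the initial word $0^{n-1}$, and assemble the code as $0H',1\overline{H'}$. The additional verifications you supply --- the block-structure reformulation of the distance condition, the explicit endpoint computations under the $\bmod\ 4$ rule, and the bipartite parity check --- are details the paper leaves implicit, but the construction is identical.
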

\begin{proof}
Let $G$ be a binary Gray code on $n-2$ bits that begins with $00\cdots0$ and ends with $11\cdots1$, either using Ruskey's construction or monotone binary Gray codes, both discussed in Section~\ref{intro}.  Applying Lemma~\ref{rev_ref} yields an $(n-1)$-bit Gray code, $H$, which begins with words $00\cdots00$, $00\cdots01$ and ends with $11\cdots11$, $11\cdots10$.  Delete the first word, $00\cdots0$ to produce $H'$.  Now construct
\[
\Gamma = 0H',1\overline{H'}
\]
\end{proof}Table~\ref{odd1} gives an example of this Gray code for $n=5$ bits.
\begin{table}[ht]
\begin{center}
\begin{tabular}{llllllll}
00001 & 00011 & 00010 & 00110 & 00111 & 00101 & 00100 & 01100 \\
& 01101 & 01001 & 01000 & 01010 & 01011 & 01111 & 01110 \\
11110 & 11100 & 11101 & 11001 & 11000 & 11010 & 11011 & 10011 \\
& 10010 & 10110 & 10111 & 10101 & 10100 & 10000 & 10001
\end{tabular}
\end{center}
\caption{A complementary Gray ordering of $\mathbb{Z}_2^5\setminus \{00000,11111\}$. \label{odd1}}
\end{table}

We can also use Lemma~\ref{rev_ref} to construct a Gray code containing all words such that the distance between complementary words is only one off half the length of the code.
\begin{thrm}
If $n$ is odd then there exists a Gray ordering of all $n$-bit binary words such that the distance between any pair of complementary words is $2^{n-1} \pm 1$.
\end{thrm}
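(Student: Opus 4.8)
The plan is to obtain the $n$-bit code by doubling an \emph{exactly} complementary cyclic Gray code on $n-1$ bits with Lemma~\ref{rev_ref}, and to show that the doubling turns the exact half-way offset into an offset of $2^{n-1}\pm1$. Since $n$ is odd, $n-1$ is even, so the construction of Section~\ref{intro} produces a complementary $(n-1)$-bit Gray code $A$; writing $A=0P,1\overline P$ with $P$ a Hamilton path from $00\cdots0$ to $11\cdots1$ in the $(n-2)$-cube shows that $A$ is in fact \emph{cyclic} (its first word $00\cdots0$ and last word $10\cdots0$ differ only in the leading bit) and that, indexing $A$ cyclically by $A(k)$ with $k$ taken modulo $N:=2^{n-1}$, it satisfies the half-shift identity $\overline{A(k)}=A(k+2^{n-2})$ for all $k$. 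Applying Lemma~\ref{rev_ref} to $A$ yields an $n$-bit Gray code $\Gamma:=A'$; because $A$ is cyclic, the lemma guarantees $\Gamma$ is cyclic as well, so $\Gamma$ is a Gray ordering of all $2^n$ words.

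It remains to track how complementation behaves under the doubling. First I would record that $\overline{\Gamma(i)}=\overline{A(\lfloor i/2\rfloor)}\,\overline b$, where $b\in\{0,1\}$ is the bit appended to $A(\lfloor i/2\rfloor)$ according to $i \bmod 4$. Using the half-shift identity this equals $A(\lfloor i/2\rfloor+2^{n-2})\,\overline b$, and by the defining rule of Lemma~\ref{rev_ref} there is a unique index $j$ with $\lfloor j/2\rfloor=\lfloor i/2\rfloor+2^{n-2}$ whose appended bit is $\overline b$, namely the position of the complement of $\Gamma(i)$. The core computation is then to solve for $j-i$: I would split into the cases determined by the parity of $i$ and of $\lfloor i/2\rfloor$ (noting that $2^{n-2}$ is even for $n\ge 3$, so it preserves these parities), read off from the $i \bmod 4$ rule which of $2\kappa$ or $2\kappa+1$ (with $\kappa=\lfloor i/2\rfloor+2^{n-2}$) carries the bit $\overline b$, and conclude that $j-i\equiv 2^{n-1}+1$ when $i$ is even and $j-i\equiv 2^{n-1}-1$ when $i$ is odd, all taken modulo $2^n$.

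Thus every complementary pair sits at distance exactly $2^{n-1}+1$ or $2^{n-1}-1$, as required. I expect the only real work to be this four-case offset computation: the appended-bit bookkeeping of Lemma~\ref{rev_ref}, together with the floor and the reduction modulo $2^n$, must be handled carefully so that the two cases land precisely on $2^{n-1}\pm1$ rather than drifting further. The conceptual step---recognising that the right object to double is the exact complementary cyclic code on $n-1$ bits, available precisely because $n-1$ is even---is what forces the wobble to be exactly one; doubling a code whose offset already wobbled would let the error accumulate, so neither iterating the lemma from the trivial code nor inserting the two missing words of the previous theorem's $\Gamma$ (whose endpoints $00\cdots0$ and $11\cdots1$ are each adjacent only to words of a single parity) would achieve distance $2^{n-1}\pm1$.
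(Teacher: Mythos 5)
Your proposal is correct and takes essentially the same route as the paper: apply Lemma~\ref{rev_ref} to an exactly complementary $(n-1)$-bit Gray code (available since $n-1$ is even) and track where complements land in the doubled code. The only difference is in the final bookkeeping --- where you solve for the offset exactly via the four-case analysis of appended bits, the paper observes that complementary prefixes sit at distance $2^{n-2}$ in the ingredient code, so the doubled words lie at distance between $2^{n-1}-1$ and $2^{n-1}+1$, and then rules out distance exactly $2^{n-1}$ by parity of weights.
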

\begin{proof}
Let $G$ be any self complementary $(n-1)$-bit Gray code.  Apply Lemma~\ref{rev_ref} to yield an $n$-bit Gray code, $H$.  Consider the word $w$ and its complement $\overline{w}$.  Their length $n-1$ prefixes are also complements and so are at distance $2^{n-2}$ in $G$.  By the construction of Lemma~\ref{rev_ref} $w$ and $\overline{w}$ are at a distance at most $2^{n-1}+1$ and at least $2^{n-1}-1$.  The parity of the words' weights establishes that complements can't be at distance $2^{n-1}$.
\end{proof}
Table~\ref{odd2} gives an example of this Gray code for $n=5$ bits.
\begin{table}[ht]
\begin{center}
\begin{tabular}{llllllll}
00000 & 00001 & 00011 & 00010 & 00110 & 00111 & 00101 & 00100 \\ 
01100 & 01101 & 01001 & 01000 & 01010 & 01011 & 01111 & 01110 \\
11110 & 11111 & 11101 & 11100 & 11000 & 11001 & 11011 & 11010 \\
10010 & 10011 & 10111 & 10110 & 10100 & 10101 & 10001 & 10000
\end{tabular}
\end{center}
\caption{A Gray ordering of $\mathbb{Z}_2^5$ with complementary words at distance $2^4\pm1 = 15$ or $17$. \label{odd2}}
\end{table}

\subsection{$q$-ary $n$-tuples for $q \geq 3$.} \label{subsec:q3}

How do we generalize the idea of complementary to $q$-ary $n$-tuples?  One often thinks of complements in $\mathbb{Z}_2^n$ as being the two words that {\em add} to $11\cdots1$.  However since $+1=-1$ in $\mathbb{Z}_2$ this can also be a subtraction.  This is the more natural notion to generalize.  In an ordering of $\mathbb{Z}_q^n$, we can ask that consecutive words be distance one in either the Hamming or the Lee metric.  We will address both metrics.

\begin{defn}
A {\em quasi-complementary} Hamming (Lee) metric Gray code of $q$-ary $n$-tuples is an ordering of the $q^n$ words such that consecutive words have Hamming (Lee) distance 1 and 
\[
G( (i + q^{n-1}) \bmod q^n) = G(i) + 111 \cdots 1 
\]
where $G(i)$ is the $i$th tuple.  Addition of tuples is done in $\mathbb{Z}_q^n$ and addition of indices is performed in $\mathbb{Z}_{q^n}$.
\end{defn}
This  definition coincides with the standard definition of complementary when $q=2$.  It ensures that all words that are separated by a multiple of $q^{n-1}$ are as far apart in the Hamming metric as possible.  Additionally, in the Lee metric, the separation in the code between words from the set is a linear function of their Lee distance.  An example of a {\em quasi-complementary} Lee metric Gray code of ternary $3$-tuples is given in Table~\ref{3^3}.
\begin{table}[ht]
\begin{center}
\begin{tabular}{lllllllll}
000 & 002 & 001 & 021 & 022 & 020 & 010 & 012 & 011 \\
111 & 110 & 112 & 102 & 100 & 101 & 121 & 120 & 122 \\
222 & 221 & 220 & 210 & 211 & 212 & 202 & 201 & 200
\end{tabular}
\end{center}
\caption{A quasi-complementary Lee metric Gray code for $\mathbb{Z}_3^3$. \label{3^3}}
\end{table}

There is a necessary condition that follows if we insist that consecutive words have distance one in the Lee metric:

\begin{prop}
If a quasi-complementary Lee metric Gray code of $q$-ary $n$-tuples exists, then either $n$ is even, $q$ is odd or $n=1$.
\end{prop}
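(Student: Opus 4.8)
The plan is to mimic the parity obstruction used for the binary case, replacing the weight parity by the parity of the coordinate sum. For a word $w=(w_1,\dots,w_n)\in\mathbb{Z}_q^n$ whose entries are read as integers in $\{0,1,\dots,q-1\}$, define $\sigma(w)=\sum_{i=1}^n w_i \bmod 2$. The first step is to record how $\sigma$ changes under a single Lee step when $q$ is even. A Lee-distance-$1$ step alters exactly one coordinate, either by $\pm 1$ without wraparound, changing that coordinate's integer representative by an odd amount, or by a wraparound between $0$ and $q-1$, changing it by $\mp(q-1)$; since $q$ is even, $q-1$ is odd, so in every case $\sigma$ flips. Consequently $\sigma$ alternates along the code, and $\sigma(G(j))\equiv\sigma(G(0))+j\pmod 2$ for every index $j$.

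The second step is to compute the effect of adding $11\cdots1$. Adding $1$ to a single coordinate changes its integer representative by $+1$ when there is no wraparound and by $-(q-1)$ when it wraps from $q-1$ to $0$; with $q$ even both changes are odd, so each of the $n$ coordinates flips, giving $\sigma(w+11\cdots1)\equiv\sigma(w)+n\pmod 2$.

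Now I combine the two computations. Take $i=0$ and consider the pair $G(0)$ and $G(q^{n-1})$, which satisfy $G(q^{n-1})=G(0)+11\cdots1$ by the quasi-complementary condition (for $n\geq 2$ one has $0<q^{n-1}<q^n$, so both words lie on the code and are joined by a run of $q^{n-1}$ Lee steps). From the first step, $\sigma(G(q^{n-1}))\equiv\sigma(G(0))+q^{n-1}\pmod 2$; from the second, $\sigma(G(q^{n-1}))\equiv\sigma(G(0))+n\pmod 2$. Hence $q^{n-1}\equiv n\pmod 2$. If $q$ is even and $n\geq 2$, then $q^{n-1}$ is even, forcing $n$ to be even. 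Equivalently, when $q$ is even a code with $n$ odd can exist only if $n=1$, so the existence of a quasi-complementary Lee metric Gray code forces $n$ even, $q$ odd, or $n=1$.

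The two parity computations are routine; the single point that needs care is the Lee wraparound, where the fact that $q-1$ is odd (because $q$ is even) is exactly what guarantees that every step and every coordinate increment flips $\sigma$. The same computation also explains why $n=1$ is genuinely exempt: there $q^{n-1}=1$ is odd, so the congruence $q^{n-1}\equiv n\pmod 2$ is satisfied and no contradiction arises.
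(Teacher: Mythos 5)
Your proof is correct and is essentially the paper's argument: the paper observes that $C_q \Box C_q \Box \cdots \Box C_q$ is bipartite when $q$ is even and compares the even separation $q^{n-1}$ in the code with the odd Lee distance between $00\cdots0$ and $11\cdots1$, which is exactly your parity obstruction. Your function $\sigma(w)=\sum_i w_i \bmod 2$ is nothing other than an explicit certificate of that bipartition, so the two proofs differ only in that you verify the parity flip coordinate-by-coordinate (including the wraparound case) rather than citing bipartiteness of a Cartesian product of even cycles.
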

\begin{proof}
Let $C_i$ denote the 2-regular connected graph on $i$ nodes, an $i$-cycle.  If $n\geq 3$ is odd and $q$ is even then the desired Gray code is a Hamiltonian cycle in
\[
\Box_{0 \leq i < n} C_q = C_q \Box C_q \Box \cdots \Box C_q
\]
which is a bipartite graph since it is the Cartesian product of bipartite graphs.  Since the code is quasi-complementary the words $00\cdots0$ and $11\cdots1$ are distance $q^{n-1}$ apart, which is even but are at an odd Lee distance which is a contradiction.
\end{proof}
The code for $n=1$ is trivially constructed by taking the elements of $\mathbb{Z}_q$ in the order they are generated from $1$.

Our primary construction is the higher modulus analog of standard construction of binary $n$-tuples when $n$ is even, given on Page~\ref{page:n_even}.
\begin{lemma}\label{lem:construct}
If a Hamming (Lee) metric Gray code of $q$-ary $(n-1)$-tuples exists that starts with $00\cdots0$ and ends with $11\cdots1$ then a  quasi-complementary Hamming (Lee)  metric Gray code of $q$-ary $n$-tuples exists.
\end{lemma}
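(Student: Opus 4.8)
The plan is to generalize directly the binary construction $0P,1\overline{P}$ from Page~\pageref{page:n_even} to $q$ blocks. Let $P$ be the given Gray code of $q$-ary $(n-1)$-tuples running from $00\cdots0$ to $11\cdots1$, and write $\mathbf{1}$ for the all-ones $(n-1)$-tuple. I would take the candidate code $\Gamma$ to be the concatenation of $q$ blocks,
\[
\Gamma = 0P,\ 1(P+\mathbf{1}),\ 2(P+2\mathbf{1}),\ \ldots,\ (q-1)(P+(q-1)\mathbf{1}),
\]
where $k(P+k\mathbf{1})$ means: form the translate $P+k\mathbf{1}$ (every word of $P$ shifted by $k\mathbf{1}$ in $\mathbb{Z}_q^{n-1}$, kept in the order inherited from $P$) and then prefix every resulting word with the digit $k$. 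When $q=2$ this is exactly $0P,\,1(P+\mathbf{1})=0P,\,1\overline{P}$, so the construction specializes correctly.

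I would then check three things. First, consecutive words within a block are at distance one: translation by the constant vector $k\mathbf{1}$ is an isometry of $\mathbb{Z}_q^{n-1}$ in both the Hamming and the Lee metric (adding a fixed digit to a coordinate changes neither $\min\{|v_i-w_i|,q-|v_i-w_i|\}$ nor whether $v_i=w_i$), and prefixing all words of a block with the same digit does not affect suffix distances, so each block inherits the Gray property of $P$. Second, the blocks glue together, and here the endpoint hypotheses on $P$ are exactly what is needed: block $k$ ends with the last word of $P+k\mathbf{1}$, which is $k\,(\mathbf{1}+k\mathbf{1}) = k\,((k+1)\mathbf{1})$, while block $k+1$ begins with the first word of $P+(k+1)\mathbf{1}$, namely $(k+1)\,(00\cdots0+(k+1)\mathbf{1}) = (k+1)\,((k+1)\mathbf{1})$. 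These two words share the suffix $(k+1)\mathbf{1}$ and differ only in the prefix digit, $k$ versus $k+1$, which are adjacent both in $K_q$ and in $C_q$; hence the inter-block step has Hamming and Lee distance one.

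Third, I would verify the quasi-complementary identity by a direct index computation. Writing $i = kq^{n-1}+j$ with $0\le j<q^{n-1}$, the $i$th word is $\Gamma(i)= k\,(P_j + k\mathbf{1})$, where $P_j$ is the $j$th word of $P$. Advancing the index by $q^{n-1}$ moves to offset $j$ of the next block, giving $\Gamma((i+q^{n-1})\bmod q^n) = (k+1\bmod q)\,(P_j+(k+1)\mathbf{1})$ with suffix arithmetic in $\mathbb{Z}_q^{n-1}$, which is precisely $\Gamma(i)+11\cdots1$; the wrap-around at $k=q-1$ is automatic since $(q-1)+1\equiv0$ and $q\mathbf{1}\equiv00\cdots0$ in $\mathbb{Z}_q$. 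None of these steps is genuinely hard; the one point that requires care, and the reason the endpoint hypotheses on $P$ cannot be dropped, is the inter-block gluing, where the last word of one translate of $P$ must meet the first word of the next translate in a single coordinate. It is exactly the fact that $P$ runs from $00\cdots0$ to $11\cdots1$ that forces both of these suffixes to equal $(k+1)\mathbf{1}$, leaving only the prefix free to change.
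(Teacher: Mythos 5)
Your construction is identical to the paper's: the paper lists the blocks as $0A+00\cdots0,\ 0A+11\cdots1,\ \ldots,\ 0A+(q-1)(q-1)\cdots(q-1)$, and $0A+kk\cdots k$ is exactly your $k(P+k\mathbf{1})$. Your three verifications (translation is an isometry in both metrics, the endpoint hypotheses glue consecutive blocks at distance one, and the index computation including wrap-around) are correct and simply spell out what the paper's one-line proof leaves implicit.
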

\begin{proof}
Let A be the  Gray code of $q$-ary $(n-1)$-tuples satisfying the hypothesis.  Then
\[
0A+00\cdots0, 0A+11\cdots1, \ldots ,0A + (q-1)(q-1)\cdots(q-1) \] 
is the desired quasi-complementary Gray code.
\end{proof}

We now establish the existence of the necessary ingredient codes for the construction.  First when $n$ is even.

\begin{lemma} \label{lem:oddn}
If $n'$ is odd then there exists a Lee metric Gray code of $q$-ary $n'$-tuples that starts with $00\cdots0$ and ends with $11\cdots1$ for any $q \geq 3$.
\end{lemma}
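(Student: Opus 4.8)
The plan is to read the statement graph-theoretically: a Lee metric Gray code of $q$-ary $n'$-tuples that starts at $00\cdots0$ and ends at $11\cdots1$ is exactly a Hamilton path in the torus $C_q \Box \cdots \Box C_q$ ($n'$ factors) whose two ends are $0^{n'}$ and $1^{n'}$. I would establish existence by induction on the dimension, with the base case $n'=1$ supplied by the path $0, q-1, q-2, \ldots, 2, 1$, which visits all of $\mathbb{Z}_q$, moves by Lee distance $1$ at each step, and runs from $0$ to $1$. The induction should split on the parity of $q$, since for $q$ even the torus is bipartite: two-colouring by the parity of the coordinate sum shows that $0^{n'}$ and $1^{n'}$ get opposite colours exactly when $n'$ is odd, which is both the hypothesis of the lemma and the condition that makes a path between them possible at all.

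For $q$ odd the cleanest route is a serpentine step that appends one coordinate. Given a path $P$ from $0^{m}$ to $1^{m}$ in $C_q^{m}$, I would stack $q$ copies of $P$ indexed by the value $j$ of the new last coordinate and visit the layers in the cyclic order $j=0, q-1, q-2, \ldots, 2, 1$, traversing $P$ forwards in layer $0$, backwards in layer $q-1$, forwards in layer $q-2$, and so on, stepping between consecutive layers at whichever endpoint of $P$ the previous layer finished on. Every step is a single Lee move, the whole torus is covered, and the traversal begins at $(0^m,0)=0^{m+1}$. Because there are $q$ layers and $q$ is odd, the last (layer $1$) copy of $P$ is traversed forwards, so the code ends at $(1^m,1)=1^{m+1}$, completing the induction for every dimension when $q$ is odd.

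When $q$ is even this step breaks down: the serpentine lays down the $q$ layers as reflected copies of $P$, and with $q$ even the number of reflections is even, so the inner coordinates are returned to their starting value and the construction ends at $(0^m,1)$ rather than at $1^{m+1}$; by the colouring argument a one-coordinate extension genuinely cannot move every coordinate. I would therefore treat even $q$ (with $n'\ge 3$ odd) by a two-dimensional reduction. Fix a Hamilton cycle $Z$ of $C_q^{n'-1}$ (products of cycles are Hamiltonian), write its vertices as $z_0, z_1, \ldots$ with $z_0=0^{n'-1}$, and let $r$ be the number of steps along $Z$ to reach $z_r = 1^{n'-1}$; since $n'-1$ is even these two vertices share a colour, so $r$ is even. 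The edges of $Z$ together with the edges that change only the last coordinate span a sub-torus isomorphic to $C_M \Box C_q$ with $M=q^{n'-1}$, and a Hamilton path in this sub-torus from $(0,0)$ to $(r,1)$ lifts, with $p \mapsto z_p$ and the second coordinate playing the role of the last entry, to a Lee metric Gray code of $\mathbb{Z}_q^{n'}$ running from $0^{n'}$ to $1^{n'}$.

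The main obstacle is then the final step: producing a Hamilton path in the two-dimensional torus $C_M \Box C_q$ between the prescribed vertices $(0,0)$ and $(r,1)$. Both factors are even, so the torus is bipartite; the parity bookkeeping above guarantees that $(0,0)$ and $(r,1)$ receive opposite colours, which is the necessary condition, and I would discharge sufficiency either by invoking the Hamilton-laceability of even-by-even tori or by giving an explicit ``brick wall with a single flip'' that inserts exactly one colour-correcting detour. The rest is routine: checking that the lift preserves Lee-adjacency and that the endpoints map to $0^{n'}$ and $1^{n'}$. I expect the honest difficulty to reside entirely in the torus laceability, with the odd-$q$ case being elementary by comparison.
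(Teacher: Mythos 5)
Your proposal is correct in substance but follows a genuinely different route from the paper, and its hardest step is delegated rather than proved. The odd-$q$ half (the serpentine, layer-by-layer induction on dimension) is complete and elementary; it is essentially the same trick the paper itself uses in Lemma~\ref{lem:oddq}, just run as a dimension-raising induction. For even $q$, your reduction of $C_q \Box \cdots \Box C_q$ to the two-dimensional torus $C_M \Box C_q$ along a Hamilton cycle $Z$ of the first $n'-1$ coordinates is sound, and the parity bookkeeping ($r$ even, endpoints in opposite colour classes) is exactly right; but everything then rests on Hamilton laceability of an even-by-even torus, which you assert rather than establish. That assertion is in fact true and citable: by the Chen--Quimpo theorem, a connected Cayley graph on a finite abelian group with degree at least $3$ is Hamilton-connected if non-bipartite and Hamilton-laceable if bipartite, and $C_M \Box C_q$ is such a Cayley graph. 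Note, however, that once you invoke that theorem you may as well apply it directly to $C_q^{n'}$ itself (degree $2n' \geq 4$ for $n' \geq 3$): for $q$ odd it is non-bipartite, hence Hamilton-connected; for $q$ even it is bipartite and $0^{n'}$, $1^{n'}$ lie in opposite parts exactly when $n'$ is odd, so laceability hands you the path --- making both your serpentine induction and your two-dimensional reduction unnecessary. The paper's proof, by contrast, is self-contained and explicit: it takes a monotone binary Gray code on $n'$ bits and replaces each binary word by a $(q-1)$-ary reflected Gray code on its support over the alphabet $\{1,\ldots,q-1\}$, exploiting that the ``sentinel'' values $1$ and $q-1$ are the two Lee-neighbours of $0$, and choosing initial sentinels so the code terminates at $11\cdots1$. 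That construction treats odd and even $q$ uniformly, needs no outside machinery, and --- importantly for the rest of the paper --- its sentinel mechanism is reused almost verbatim in the proof of Theorem~\ref{thrm:evenq1}, which your approach would not support as directly.
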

\begin{proof}
We extend a monotonic binary Gray code to $q$-ary words by replacing each binary word, $w$, with a $(q-1)$-ary Gray code on the support of $w$ with symbols $\{1,2,\ldots,q-1\}$. We will replace every binary word $w$ by an instance of the $(q-1)$-ary reflected Gray code \cite{MR2251595}.  The positions with 0 in $w$ will have 0 fixed in these positions for the entire code that is replacing $w$.  The alphabet of the code replacing $w$ is $\{1,2,\ldots,q-1\}$ with sentinels 1 and $q-1$. The $(q-1)$-ary reflected Gray code never changes one sentinel directly to the other so it will remain valid for the Lee metric on $\mathbb{Z}_q$. Additionally, the  $(q-1)$-ary reflected Gray code has the property that if all the values start set to sentinels then the code will end with all positions set to sentinels.  The only decision to make is with which sentinel values the positions start.  If $w'$ is the word preceding $w$ in the binary monotone Gray code, then any position where $w$ and $w'$ have a common 1, the $(q-1)$-ary reflected Gray code replacing $w$ will be initially set at the sentinel value where the code replacing $w'$ ended in this position.  There is at most one position in $w$ that is non-zero which does not inherit its initial sentinel values from $w'$ in this way (because the Hamming distance of $w$ and $w'$ is one).  For this position, if there is one, we can choose either sentinel value because both `1' and `$q-1$' are Lee distance one from `0'.

The last word in the binary monotone Gray code is $11\cdots1$.  This is replaced by an instance of the $(q-1)$-ary reflected Gray code which will end in a word that has either a $1$ or $q-1$ in each position.  Which of these sentinels each position contains depends only and directly on the most recent choice of a sentinel value for this position to start an instance of a  $(q-1)$-ary reflected Gray code.  In order to finish with the word $11\cdots1$ we simply make the right choice for each position. 
\end{proof}
Table~\ref{qrgc} gives the Gray code for $q=4$ and $n'=3$ constructed by Lemma~\ref{lem:oddn}.  The positions where the correct choice of sentinel start positions must be made to guarantee ending in $111$ are underlined.
\begin{table}[ht]
\begin{center}
\begin{tabular}{llllllllllll}
  000 & 001 & 002 & 003 & 013 & 012 & 011 & 021 & 022 & 023 & 013 & 012 \\
  011 & 010 & 020 & 030 & \underline{1}30 & 120 & 110 & 210 & 220 & 230 & 330 & 320 \\
  310 & 300 & 200 & 100 & 10\underline{1} & 102 & 103 & 203 & 202 & 201 & 301 & 302 \\
  303 & 3\underline{3}3 & 332 & 331 & 321 & 322 & 323 & 313 & 312 & 311 & 211 & 212 \\
  213 & 223 & 222 & 221 & 231 & 232 & 233 & 133 & 132 & 131 & 121 & 122 \\
  123 & 113 & 112 & 111 
\end{tabular}
\end{center}
\caption{The Gray code for $4$-ary 3-tuples constructed by Lemma~\ref{lem:oddn}. \label{qrgc}}
\end{table}

\begin{cor} \label{cor:neven}
There exists a  quasi-complementary Lee metric Gray code of $q$-ary $n$-tuples whenever $n \in \mathbb{N}$ is even.\end{cor}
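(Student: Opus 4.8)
The plan is to obtain the result as a direct composition of the two immediately preceding lemmas, so the only real work is parity bookkeeping. Since $n$ is even, $n-1$ is odd, which is exactly the parity required to invoke Lemma~\ref{lem:oddn} with $n' = n-1$. For $q \geq 3$, that lemma supplies a Lee metric Gray code of $q$-ary $(n-1)$-tuples that begins at $00\cdots0$ and ends at $11\cdots1$. This is precisely the ingredient code demanded by the hypothesis of Lemma~\ref{lem:construct}, so I would simply feed it in.

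Concretely, I would write: because $n$ is even, apply Lemma~\ref{lem:oddn} to $n-1$ to produce the desired starts-at-zeros/ends-at-ones Lee metric Gray code on $(n-1)$-tuples, and then apply Lemma~\ref{lem:construct} with the Lee metric to promote it to a quasi-complementary Lee metric Gray code on $n$-tuples. That closes the case $q \geq 3$ in two sentences.

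The only point that needs separate attention is the boundary value $q = 2$, which the surrounding subsection nominally excludes but which is worth acknowledging since the corollary is stated for all $q$. Here the Lee and Hamming metrics coincide on $\mathbb{Z}_2$, so the required $(n-1)$-bit ingredient running from $00\cdots0$ to $11\cdots1$ is furnished by Ruskey's construction or a monotone Gray code from Section~\ref{intro}; Lemma~\ref{lem:construct} then recovers the classical even-$n$ complementary binary Gray code. I do not expect any genuine obstacle in this proof: all of the combinatorial content is already discharged inside Lemmas~\ref{lem:oddn} and \ref{lem:construct}, and the sole thing to verify is that ``$n$ even'' hands the downstream lemma an odd value $n-1$ of the right form.
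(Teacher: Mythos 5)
Your proof is correct and is exactly the derivation the paper intends: the corollary appears with no written proof precisely because it is the immediate composition of Lemma~\ref{lem:oddn} (applied to the odd value $n-1$) with Lemma~\ref{lem:construct} in the Lee metric. Your added remark about $q=2$ is harmless and consistent (Lee and Hamming coincide on $\mathbb{Z}_2$, recovering the classical even-$n$ construction from Section~\ref{intro}), though the surrounding subsection already restricts attention to $q \geq 3$.
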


When $q$ is odd we construct an different ingredient code.
\begin{lemma}\label{lem:oddq}
If $q$ is odd then there exists a Lee metric Gray code of $q$-ary $n$-tuples that starts with $00\cdots0$ and ends with $11\cdots1$ for any $n \geq 1$.
\end{lemma}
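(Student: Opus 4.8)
The plan is to prove this by induction on $n$, building the $n$-tuple code from an $(n-1)$-tuple code of the same type by sweeping through the last coordinate. Note that we cannot simply invoke Lemma~\ref{lem:construct}, since that lemma produces a \emph{quasi-complementary} code, whereas here I need an ingredient code with the prescribed endpoints $00\cdots0$ and $11\cdots1$; so a direct induction is cleanest. For the base case $n=1$ I need a Hamilton path in $C_q$ from $0$ to $1$, and the ``long way around'' path $0, q-1, q-2, \ldots, 2, 1$ does the job: every consecutive pair is at Lee distance $1$, the path starts at $0$ and ends at $1$. This same cyclic order of $\mathbb{Z}_q$ will reappear as the order in which I sweep the layers in the inductive step.

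For the inductive step, let $A=(a_0,\ldots,a_{m-1})$ with $m=q^{n-1}$ be a Lee metric Gray code of $q$-ary $(n-1)$-tuples having $a_0=00\cdots0$ and $a_{m-1}=11\cdots1$, which exists by the inductive hypothesis. Partition $\mathbb{Z}_q^n$ into $q$ layers according to the value $j\in\mathbb{Z}_q$ of the last coordinate, and traverse the layers as whole blocks in the order $0, q-1, q-2, \ldots, 2, 1$. Since this is a Hamilton path in the $C_q$ of last coordinates, each block-to-block move changes only the last coordinate, and by a single Lee step. Within the $k$th block I lay down a copy of $A$ in its fixed last coordinate, traversed forward (from the $00\cdots0$ end to the $11\cdots1$ end) when $k$ is even and in reverse when $k$ is odd.

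The routine verification is then that each transition between consecutive blocks joins a $11\cdots1$ exit to a $11\cdots1$ entry (after a forward block) or a $00\cdots0$ exit to a $00\cdots0$ entry (after a reverse block), so that every step really is a single Lee step; and that the global path begins at $(00\cdots0,0)=00\cdots0$. All $q\cdot q^{n-1}=q^n$ words are distinct because the last coordinate distinguishes layers and $A$ is a Gray code within each layer, so this is a genuine Hamilton path in the torus.

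The one place the hypothesis that $q$ is odd enters — and the step I expect to be the main obstacle — is arranging the path to \emph{end} exactly at $11\cdots1$ rather than at some other layer or end. The layer with last coordinate $1$ is visited last, in position $q-1$ of the sweep; since $q$ is odd, $q-1$ is even, so that final block is traversed forward and terminates at $(11\cdots1,1)=11\cdots1$, as required. Had $q$ been even the final block would instead be reversed and the path would terminate at $(00\cdots0,1)$; this is precisely why a separate argument is needed to supply ingredient codes when $q$ is even, which is the role of Lemma~\ref{lem:oddn}.
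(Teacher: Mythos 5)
Your proof is correct, and its engine is the same as the paper's: sweep the new coordinate through $\mathbb{Z}_q$ in the cyclic order $0, q-1, q-2, \ldots, 1$, laying down alternately forward and reversed copies of the $(n-1)$-dimensional code, with the oddness of $q$ guaranteeing that the final block (the layer with coordinate value $1$) is traversed forward and hence ends at $11\cdots1$; this is exactly the paper's displayed construction $0G,\,(q-1)G^R,\,(q-2)G,\ldots,1G$, except that you append the swept coordinate where the paper prefixes it. Where you genuinely diverge is in the scaffolding. The paper does not induct: it invokes Lemma~\ref{lem:oddn} to dispose of all odd $n$ at once and to supply the ingredient code of length $n-1$ when $n$ is even, so the sweep is applied exactly once. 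You instead run a self-contained induction on $n$ from the trivial base case $n=1$ (the path $0, q-1, \ldots, 2, 1$ in $C_q$), applying the sweep at every level and never touching Lemma~\ref{lem:oddn}. Your route shows that for odd $q$ none of the monotone-Gray-code and sentinel machinery of Lemma~\ref{lem:oddn} is needed --- the statement has a short elementary proof --- whereas the paper's route is a one-line reduction given that Lemma~\ref{lem:oddn} must be established anyway for the even-$q$ cases (Corollary~\ref{cor:neven} and Theorem~\ref{thrm:evenq1}). Both arguments are complete; yours is the more self-contained, the paper's the more economical within its own ecosystem of lemmas.
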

\begin{proof}
By appealing to Lemma~\ref{lem:oddn} we may assume that $n$ is even and therefore that there exists a Gray code, $G$, of $q$-ary $(n-1)$-tuples that starts with $00\cdots0$ and ends with $11\cdots1$.  Now  
\[
0G, (q-1)G^R, (q-2)G, (q-3)G^R, \ldots, 1G
\]
is the desired Gray code for $q$-ary $n$-tuples.  The notation $G^R$ indicates the {\em reversal} of $G$: the tuples from $G$ listed in the reverse order.
\end{proof}

\begin{cor}
There exists a  quasi-complementary Lee metric Gray code of $q$-ary $n$-tuples whenever $q \geq 3$ is odd.
\end{cor}
Table~\ref{3^3} shows a quasi-complementary Lee metric Gray code for \linebreak $q=n=3$.

In the case that $n>1$ is odd and $q$ is even we show that one can construct the best possible code in the first of the senses described in Subsection~\ref{subsec:binary}, i.e. removing words from the code.  Since the Lee metric distance between $w$ and $w + 11\cdots1$ is unavoidably odd, we must remove at least one word between them (in other words, miss the vertex in the cycle on the Lee metric graph) so that the separation between them in the code matches the parity of the distance between them.  Thus, at least $q$ words must be removed in total from the code and this  lower bound on the number of missed words can be achieved.

\begin{thrm} \label{thrm:evenq1}
If $n$ is odd and $q$ is even, then there exists a  quasi-complementary Lee metric Gray code of $q$-ary $n$-tuples, missing $w+ ii\cdots i$, $0 \leq i < q$ for any choice of $w \in \mathbb{Z}_q^n$.
\end{thrm}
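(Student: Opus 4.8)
The first step is to reduce to one canonical choice of $w$. For any fixed $c\in\mathbb{Z}_q^n$, adding $c$ to every word of the code preserves all Lee distances and commutes with the shift relation $G(\cdot+\text{offset})=G(\cdot)+11\cdots1$; hence the translate is again a quasi-complementary Lee metric Gray code, now omitting $\{(w+c)+ii\cdots i\}$. Taking $c=-w$ we may therefore assume $w=00\cdots0$, so that the omitted words are exactly the diagonal orbit $\{ii\cdots i:0\le i<q\}$. The $q$ omitted words lie one shift-step apart, so the surviving code has length $q^n-q$ and the natural quasi-complementary condition becomes $G\big((i+(q^{n-1}-1))\bmod(q^n-q)\big)=G(i)+11\cdots1$; the offset $q^{n-1}-1$ is odd, exactly matching the odd Lee distance $n$ between $w$ and $w+11\cdots1$ that made the deletion necessary.

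Next I would set up the higher-modulus analogue of the construction on Page~\ref{page:n_even}, exactly as in Lemma~\ref{lem:construct} but fed a \emph{punctured} ingredient. Suppose $A$ is a Lee metric Gray code visiting every $(n-1)$-tuple except $00\cdots0$ (a Hamilton path of the $(n-1)$-dimensional Lee torus with the origin deleted) running from some $s$ to $s+11\cdots1$. I then form
\[
0A+00\cdots0,\; 0A+11\cdots1,\;\ldots,\;0A+(q-1)(q-1)\cdots(q-1).
\]
Prepending $0$ and adding $ii\cdots i$ makes block $i$ occupy layer $i$ (first coordinate $i$) and omit precisely the word $ii\cdots i$, so the omitted set is the diagonal orbit. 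Within a block consecutive words inherit Lee distance $1$ from $A$; at a block boundary the tail $0(s+11\cdots1)+ii\cdots i$ and the head $0s+(i+1)(i+1)\cdots(i+1)$ agree off the first coordinate and differ by one there, and the wrap-around from block $q-1$ to block $0$ is identical; finally, advancing an index by the block length $q^{n-1}-1$ moves up one block and so adds $11\cdots1$, which is the quasi-complementary shift. All of these verifications are routine.

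The crux is therefore the ingredient: a Hamilton path of the $(n-1)$-dimensional Lee torus with the origin removed whose two ends are $s$ and $s+11\cdots1$. Here $n-1$ is even and $q$ is even, so the torus is bipartite, with colour classes given by the parity of Lee weight; deleting the even-weight vertex $00\cdots0$ leaves the odd class larger by one, which forces both ends of any Hamilton path into the odd class. Since adding $11\cdots1$ changes Lee weight by the even number $n-1$, the candidate ends $s$ and $s+11\cdots1$ automatically have equal parity, and I choose $s$ odd, so no parity obstruction survives. The real difficulty is that one cannot obtain such a path by deleting $00\cdots0$ from a Hamilton cycle: the two ends would then both be neighbours of the origin, hence at mutual Lee distance at most $2$, whereas $s$ and $s+11\cdots1$ are at Lee distance $n-1>2$. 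I would instead build the path by snaking over the layers of one further $C_q$ factor, using the odd-dimensional paths from $00\cdots0$ to $11\cdots1$ furnished by Lemma~\ref{lem:oddn} as the per-layer pieces, traversing the origin's layer by a subpath that omits the origin, and ordering the layers (for example $0,q-1,q-2,\ldots,1$) so that the two global ends land in adjacent layers and differ by exactly $11\cdots1$. Threading the deleted origin through the snake while pinning the ends to $s$ and $s+11\cdots1$ is the principal obstacle; I expect it to reduce, by induction on the dimension with explicit small base cases, to a mildly shifted instance of the same punctured-path problem one dimension lower (the target offset $11\cdots1$ becoming $22\cdots2$).
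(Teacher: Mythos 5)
Your outer framework coincides with the paper's: translation invariance disposes of the choice of $w$ (the paper does this at the very end, adding $w-0(q-1)00\cdots0$ to every word), the code is assembled from a once-punctured $(n-1)$-dimensional ingredient via the block construction of Lemma~\ref{lem:construct}, your block-boundary and wrap-around checks are correct, and the quasi-complementary offset indeed becomes $q^{n-1}-1$ modulo $q^n-q$. Your bipartite parity analysis of the punctured torus is also sound.

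But the substance of the theorem is the construction of that punctured ingredient, and that is exactly what your proposal does not contain. You reduce everything to finding a Lee metric Hamilton path of $C_q\Box\cdots\Box C_q$ ($n-1$ factors) minus one vertex whose two ends differ by $11\cdots1$, and then offer only a plan (snake over one further $C_q$ factor, thread the deleted origin through, induct with the offset drifting to $22\cdots2$), while yourself flagging the threading-and-pinning step as the principal unresolved obstacle. The obstacle is real: the layer ordering $0,q-1,q-2,\ldots,1$ you propose is precisely the device of Lemma~\ref{lem:oddq}, and it terminates at $11\cdots1$ only because $q$ is odd there; when $q$ is even the last layer is traversed in reverse and the plain snake ends at $10\cdots0$, so repairing the ends while also avoiding the origin requires a genuinely new idea, not a routine induction. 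The paper's idea is different and explicit: take a Savage--Winkler monotone binary Gray code of $(n-1)$-tuples beginning $100\cdots0,\,00\cdots0$ and ending $11\cdots1$ \cite{MR1329390}, expand each binary word into a $(q-1)$-ary reflected Gray code on its support exactly as in Lemma~\ref{lem:oddn}, giving a $q$-ary code that starts with the spike $100\cdots0,\,200\cdots0,\,\ldots,\,(q-1)00\cdots0,\,00\cdots0,\ldots$ and ends at $11\cdots1$; then relocate the spike words in adjacent pairs $i00\cdots0,\,(i+1)00\cdots0$, inserting each pair between the consecutive words $i00\cdots0(q-1)0\cdots0$ and $(i+1)00\cdots0(q-1)0\cdots0$ of the reflected code placed on the first weight-two binary word containing the first coordinate (made consecutive by treating the non-leading nonzero coordinate as the low-order position), and discard the single leftover word $(q-1)00\cdots0$, which exists because $q-1$ is odd. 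This produces the required path from $00\cdots0$ to $11\cdots1$ missing exactly one word; it, or something equally concrete, is the step your proposal still needs to supply.
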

\begin{proof}
  We use Lemma~\ref{lem:construct} and build the ingredient in a similar manner to Lemma~\ref{lem:oddn}.  There exists a binary monotonic Gray code of $n$-tuples that starts with $100\cdots0$, $00\cdots0$ and ends at $11\cdots1$ \cite{MR1329390}.  We will expand each word to a $(q-1)$-ary reflected Gray code on its support just as in Lemma~\ref{lem:oddn}.  This will produce a $q$-ary Gray code,
  \[
    100\cdots0,\; 200\cdots0,\; \ldots,\; (q-1)00\cdots0,\; 00\cdots0,\; \ldots,\; 11\cdots1.
  \]
   We want this code to begin with $00\cdots0$ so we remove the initial tuples in pairs, $i00\cdots0$ and $(i+1)00\cdots0$, and insert them between the consecutive tuples $i00\cdots0(q-1)0\cdots0$ and $(i+1)00\cdots0(q-1)0\cdots0$ (or consecutive tuples $i00\cdots010\cdots0$ and $(i+1)00\cdots010\cdots0$) which appear in the first $(q-1)$-ary reflected Gray code placed on the first binary tuple of weight 2 that includes the first coordinate.  To ensure that the target pair of consecutive words exist consecutively in the $(q-1)$-ary reflected Gray code, we must treat the non-zero coordinate which is not the first as the ``low order'' position in the code, the one which changes most often.  

Since $q-1$ is odd the tuple $(q-1)00\cdots0$ will be left over and this is discarded, as we knew one tuple must be. In the monotonic binary Gray code between the first weight 2 tuple including the first coordinate and the last, full weight, tuple, every coordinate will be zero at least once.  This means that this slight alteration of the code does not interfere with the choice of sentinels necessary to ensure that we end with the $q$-ary tuple $11\cdots1$ in exactly the same manner at Lemma~\ref{lem:oddn}. 

We now have a code which satisfies the hypotheses of Lemma~\ref{lem:construct} and is missing $(q-1)00\cdots0$.  The Lee metric Gray code constructed will be missing the words $0(q-1)00\cdots0 + ii\cdots i$, $0 \leq i < q$.  We can choose the missing words arbitrarily to $w + ii\cdots i$ by the addition of $w-0(q-1)00\cdots0$ to every word in the sequence.

\end{proof}

The second relaxation of ``complementary'' in the binary case with an odd number of bits asks for complementary words to be separated by one more or one less than half the length of the code: $q^{n-1}$.  We show that this is not possible when $q\geq 4$ is even.  

\begin{thrm}
When $q\geq 4$ is even and $n\geq 3$ is odd, there cannot exist a Lee metric Gray code of $q$-ary $n$-tuples with the property that  words $w$ and $w + 11\cdots 1$ have separation $q^{n-1} \pm 1$.
\end{thrm}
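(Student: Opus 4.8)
The basic obstruction tool is the parity fact for bipartite graphs: since $q$ is even, the Lee graph $C_q \Box C_q \Box \cdots \Box C_q$ is bipartite, the two classes being distinguished by the parity of $\sum_j w_j$. Hence in any Lee metric Gray code the separation between two words has the same parity as their Lee distance. For the quasi-complementary separation $q^{n-1}$ this already fails (since $q^{n-1}$ is even while the Lee distance between $w$ and $w+11\cdots1$ is $n$, which is odd), and that is exactly the content of the earlier proposition; the relaxation to $q^{n-1}\pm1$ is engineered precisely to restore this parity. So the first thing I would record is that the parity test is now passed, and therefore the heart of the matter must use strictly more than bipartiteness. I would then isolate the feature that separates $q\geq 4$ from $q=2$: the coordinate-sum cycle $C_q$ is a genuine even cycle, in which residues two apart are non-adjacent, rather than the single edge $K_2$.

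Next I would pass to the coordinate-sum projection $\sigma\colon\mathbb{Z}_q^n\to\mathbb{Z}_q$, $\sigma(w)=\sum_j w_j \bmod q$. Because a Lee step changes exactly one coordinate by $\pm1$, $\sigma$ is a graph homomorphism onto $C_q$, so the assumed code projects to a closed walk $W(i)=\sigma(G(i))$ that takes a $\pm1$ step at every index, visits each of the $q$ residues exactly $q^{n-1}$ times, and under which complementation becomes the rotation $W(i)\mapsto W(i)+n$. Lifting $W$ to the universal cover $\mathbb{Z}$ of $C_q$ gives an integer walk $S$ with $S(i+1)-S(i)=\pm1$ and $S(i+q^n)=S(i)+q\Omega$, where $\Omega$ is the winding number of the projected walk. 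The complement hypothesis then reads $S(i+d(i))-S(i)\equiv n \pmod q$ with $d(i)\in\{q^{n-1}-1,\,q^{n-1}+1\}$. Summing this around one orbit of the complement permutation $\psi(i)=i+d(i)$ --- a fixed-point-free permutation of order $q$ whose $q^{n-1}$ orbits are the cosets of the diagonal $\langle 11\cdots1\rangle$ --- yields an identity of the shape $\Omega = n + \sum_{k}\mu_{i_k}$ for integers $\mu$, together with the balance constraint that, on each orbit, exactly half of the offsets $d(i)-q^{n-1}$ equal $+1$ and half equal $-1$.

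The crux, and the step I expect to be the main obstacle, is to convert the near-periodicity ``$d(i)=q^{n-1}\pm1$ for every $i$'' together with the Hamiltonicity of $G$ (each vertex, hence each level-fibre, is used exactly once) into a contradiction on this winding data. The plan is to count in two independent ways the net number of times the projected walk crosses a fixed edge of $C_q$: once by flow-conservation, which makes the net crossing equal $\Omega$ across every edge, and once by the approximate complement-symmetry, which forces the total crossing numbers of the $q$ edges to be nearly equal and pins their common parity through $q^{n-1}$. For $q=2$ the projection is forced to be the trivial alternating walk and these counts carry no information, which is precisely why the binary relaxation succeeds; for $q\geq 4$ the genuine cyclic freedom of $C_q$ lets the two counts be compared, and the aim is to show they cannot both be consistent with every $d(i)$ lying in $\{q^{n-1}\pm1\}$. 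The delicate point will be controlling the finitely many ``defect'' indices at which $d(i)$ switches sign --- exactly where the complement fails to carry code-edges to code-edges --- and showing that the parity or counting discrepancy they induce is forced to be nonzero.
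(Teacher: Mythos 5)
Your preparatory material is essentially sound, and in one respect stronger than you note: inside your own framework the injectivity of $\psi$ already yields the key structural fact about the offsets. Indeed $\psi(i)=\psi(i+2)$ would require $d(i)=d(i+2)+2$, i.e.\ $d(i)=q^{n-1}+1$ and $d(i+2)=q^{n-1}-1$; so injectivity forces the offset $+1$ at $i$ to propagate to $i+2$, and iterating around the even-length index cycle makes the offset constant on each parity class, after which your (correct) orbit-balance constraint forces the two classes to carry opposite signs --- the offsets alternate. The genuine gap is everything after that: the step you yourself call ``the main obstacle'' --- counting edge crossings of $C_q$ in two ways and showing the counts are inconsistent --- is a plan, not an argument. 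No contradiction is ever derived, so the proposal does not prove the theorem.

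Worse, this gap cannot be closed within your framework, because the coordinate-sum projection provably discards the obstruction. For $q=4$, $n=3$ every constraint visible after projecting is simultaneously satisfiable: take $a_i=i\bmod 4$ for $0\le i\le 15$, set $a_{2k+16}=a_{2k+1}+3$ and $a_{2k+17}=a_{2k}+3$ for $0\le k\le 7$, and extend by $a_{i+32}=a_i+2$. Then $W(i)=a_i$ is a closed walk of length $64$ on $C_4$ with unit Lee steps (for $q=4$ a unit step is exactly an odd difference mod $4$), each residue is visited exactly $16$ times, and with $d(i)=17$ for even $i$, $d(i)=15$ for odd $i$ the map $\psi(i)=i+d(i)$ is a bijection with balanced orbits satisfying $W(\psi(i))=W(i)+3 \pmod 4$ for every $i$. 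So fibre counts, winding number, gross and net edge crossings, orbit balance, bijectivity of $\psi$ --- all the data your counting scheme could invoke --- coexist consistently, and no argument at the projected level can finish. The obstruction lives precisely in what $\sigma$ forgets: \emph{which} coordinate changes at each step. That is what the paper uses: after establishing the alternation of the offsets, it studies the changed-position function $p(i)$ at a place where $p(i)=1\neq p(i+1)$, compares the four words sitting near the complements of $G(i),\ldots,G(i+2)$, uses $q>2$ to exclude $1\equiv-1$, and concludes that $p(i)$ depends only on $i\bmod 2$; then only two coordinates ever change, which is absurd for $n\ge 3$. Any repair of your approach must retain at least this per-coordinate information (for instance, projecting onto a pair of coordinates, or tracking $p$ directly), not merely the coordinate sum.
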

\begin{proof}
Let such a code be $G$ and the $i$th word be $G(i)$. We will generate a contradiction.  If 
\[
G(i) + 11\cdots1 = G(i + q^{n-1} + \epsilon)
\]
then let $e(i) = \epsilon=\pm 1$.  And let $p(i)$ be the position of the word that changes between $G(i)$ and $G(i+1)$.  
If two consecutive $\epsilon$s are both positive, $e(i) = e(i+1) = 1$, then it is easy to check that $e(i+2) = 1$ since the word at separation $q^{n-1} -1$ from $G(i+2)$ is already determined to be $G(i) + 11\cdots1 \neq G(i+2) + 11\cdots1$.  And since it is not possible that all $\epsilon$s are 1, this forbids consecutive $\epsilon$s from being positive.  Similarly it can be shown that two consecutive negative $\epsilon$s are impossible.  So the $\epsilon$s must alternate between 1 and -1.

Now that we know the behaviour of $e()$, we examine the behaviour of $p()$.  Consider a place in the code where $p(i) = 1$ and  $p(i+1) \neq 1$ (one must exist because $n > 1$). Then
\[
G(i) = x w,  \;\; G(i+1) = (x\pm1) w, \;\; G(i+2) = (x \pm1) w',
\]
where $x$ is a $q$-ary digit and $w,w' \in \mathbb{Z}_q^{n-1}$.
If $e(i) = 1$, for $j=i+q^{n-1}$,  we have
\begin{align*}
&G(j) = (x\pm1+1) (w+11\cdots1), & \; &G(j+1) = (x +1) (w+11\cdots1), \\
&G(j+2) = ?, && G(j+3) = (x\pm1+1) (w' + 11\cdots1).
\end{align*}
We consider what word is $G(j+2)$?  If $p(j+1) = 1$ then $G(j+2) = (x \pm -1 +1) (w + 11\cdots1)$ which is to be distance one in the Lee metric from $G(j+3) = (x\pm1+1) (w' + 11\cdots1)$ and thus $x \pm -1 + 1 = x \pm 1 + 1$ from which we can conclude that $1=-1$ but this is not possible since $q > 2$.  Thus we have that $p(j+1) \neq 1$ and so $G(j+2) = (x+1) w''$.  Since $x+1 \neq x \pm1 +1$ we must have that $p(j+2) = 1 = p(j)$ and $w''=w'$.  This forces $p(i+2) = 1$. The same argument can be applied to $p(i+1)$ and $p(i+2)$, which differ, to conclude that $p(i+3) = p(i+1)$ and in general $p(i) = p(i \bmod 2)$ for all $i$.  Since $n > 2$ this gives a contradiction.  The proof when $e(i) = -1$ is similar.
\end{proof}

Even though a Gray code with complementary words separated by $q^{n-1}\pm 1$ does not exist, we can bound the change in separation of words that makes such a Lee metric Gray code possible.

\begin{thrm}
When $q\geq 4$ is even and $n$ is odd, there exists a Lee metric Gray code of $q$-ary $n$-tuples with the property that  words $w$ and $w + 11\cdots 1$ have separation $q^{n-1} + 1$ or $q^{n-1} - (q-1)$.
\end{thrm}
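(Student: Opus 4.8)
The plan is to first determine the rigid combinatorial skeleton that any admissible code must satisfy, and only then to build a code matching it by grafting a one-position shift onto the block construction of Lemma~\ref{lem:construct} together with the word-relocation idea of Theorem~\ref{thrm:evenq1}.

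First I would establish the parity and counting constraints, which pin down where the two separation values must occur. Exactly as in the previous theorem, $C_q \Box \cdots \Box C_q$ is bipartite with classes indexed by the parity of the coordinate sum; since $n$ is odd, $w$ and $w+11\cdots1$ lie in opposite classes, so every complementary pair must sit at an odd separation. Both candidate values are odd: as $q$ is even, $q^{n-1}$ is even while $q-1$ is odd, so $q^{n-1}+1$ and $q^{n-1}-(q-1)$ are both odd and the target is parity-consistent. The previous theorem forbids every pair from lying at $q^{n-1}\pm1$; here the value $q^{n-1}-(q-1)$ lies outside $\{q^{n-1}-1,\,q^{n-1}+1\}$ for $q\ge 4$, so permitting it on some pairs is exactly what evades that obstruction. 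Next, fixing a single complementary cycle $w, w+11\cdots1, \dots, w+(q-1)(q-1)\cdots(q-1)$, its $q$ forward separations must sum to a multiple of $q^n$ in order to close up. If $a$ of them equal $q^{n-1}+1$ and $b=q-a$ equal $q^{n-1}-(q-1)$, the sum is $q^n + a - b(q-1)$, a multiple of $q^n$ only when $a-b(q-1)=0$, forcing $a=q-1$ and $b=1$. Thus every complementary cycle carries exactly $q-1$ steps of length $q^{n-1}+1$ and a single step of length $q^{n-1}-(q-1)$, and placing its words at positions $P+k(q^{n-1}+1)$ for $0\le k<q$ closes up correctly. Because $\gcd(q^{n-1}+1,\,q^n)=1$, the set $\{k(q^{n-1}+1):0\le k<q\}$ tiles $\mathbb{Z}_{q^n}$ under the translates $P=0,q,2q,\dots$, so positions can be assigned consistently over all $q^{n-1}$ cycles at once. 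Equivalently, the word at position $p$ should have complement at $p+(q^{n-1}+1)$ when $p\not\equiv q-1\pmod q$ and at $p+q^{n-1}-(q-1)$ when $p\equiv q-1\pmod q$.

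With the skeleton fixed, I would construct the code as $q$ consecutive blocks of length $q^{n-1}$, the $k$th block holding all $n$-tuples of a fixed first coordinate, as in Lemma~\ref{lem:construct}, but with the interior ordering of each block advanced by one position relative to the previous block. The interiors stay valid because adding the constant tuple $kk\cdots k$ and cyclically rotating a traversal of the $(n-1)$-torus both preserve Lee-adjacency, so I would take each block's interior to be a $(q-1)$-ary reflected expansion of a binary monotone Gray code, exactly as in Lemma~\ref{lem:oddn}. This per-block advance is what turns the separation $q^{n-1}$ of the naive (but, by parity, nonexistent) quasi-complementary code into $q^{n-1}+1$ on the interior transitions of each cycle while depositing the unavoidable deficit into the single $-(q-1)$ step.

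The main obstacle is that a uniform one-position shift is globally inconsistent and wrecks the seams. It is globally inconsistent because the complement map has order $q$, whereas iterating an offset-advance of $+1$ per block returns to offset $+q$ rather than to the starting word; this is precisely why one $-(q-1)$ correction per cycle is mandatory, and the correction must be apportioned across the seams so that the special step always falls on the words with position $\equiv q-1\pmod q$. At the seams a naive shift leaves the last word of block $k$ and the first word of block $k+1$ differing by $11\cdots1$ in their last $n-1$ coordinates, hence at Lee distance $n-1$ rather than $1$. I would repair both problems by the relocation technique of Theorem~\ref{thrm:evenq1}, moving a controlled family of words (the analogue of the tuples $i00\cdots0$) across the seams so that the cumulative advance is realized one unit at a time and every seam is crossed at Lee distance $1$, while alternating block directions as in Lemma~\ref{lem:oddq} keeps the first coordinate a Lee-Gray traversal of $C_q$. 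The parity defect, that no $(n-1)$-tuple Lee Gray code runs from $00\cdots0$ to $11\cdots1$ when $n-1$ is even and $q$ is even, is absorbed by the sentinel-choice argument of Lemma~\ref{lem:oddn}, selecting the starting sentinels of the reflected blocks so that the relocations close up and every coordinate returns correctly. The remaining work, checking that all $q^n$ tuples appear once and that each complementary pair lands at separation $q^{n-1}+1$ or $q^{n-1}-(q-1)$, is then routine bookkeeping against the skeleton from the first step.
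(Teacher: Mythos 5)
Your proposal is not a proof: the construction is never actually pinned down, and the repair step on which everything depends is incompatible with the skeleton you yourself derived. Your counting argument correctly shows the constraint has zero slack: each of the $q^{n-1}$ complementary cycles must realize exactly $q-1$ separations equal to $q^{n-1}+1$ and one equal to $q^{n-1}-(q-1)$. But your fix for the broken seams is to ``relocate a controlled family of words,'' and any relocation changes positions and therefore separations --- for the moved words, for their entire complementary cycles, and for every word that slides over to make room. The analogy with Theorem~\ref{thrm:evenq1} does not transfer: there, relocation is performed inside the \emph{ingredient} path before Lemma~\ref{lem:construct} is applied, where nothing but Gray-ness and the endpoints must be preserved (and the leftover word is simply discarded); here it would have to be performed on the finished code, where the very property being proved leaves no room to move anything. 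Two further ingredients you invoke are also incompatible with your skeleton: alternating block directions as in Lemma~\ref{lem:oddq} sends the word at offset $p$ of one block to a complement at offset roughly $q^{n-1}-1-p$ of the next block, making separations offset-dependent rather than close to $q^{n-1}$; and no choice of sentinels can ``absorb'' a parity defect, since bipartiteness is untouched by such choices --- what actually absorbs it, in any correct construction, is precisely the fact that complements sit at separation $\neq q^{n-1}$.

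The paper avoids the seam problem altogether by choosing the opposite decomposition: blocks of length $q$ indexed by $(n-1)$-tuples, not blocks of length $q^{n-1}$ indexed by the first coordinate. It takes a quasi-complementary Lee metric code $G$ of $q$-ary $(n-1)$-tuples (Corollary~\ref{cor:neven}, available because $n-1$ is even) and appends a zigzag last coordinate: $G'(i)$ is $G(\lfloor i/q\rfloor)$ followed by $i\bmod q$ if $\lfloor i/q\rfloor$ is even and by $(q-1-i)\bmod q$ if it is odd. Within a block only the last coordinate moves; at every seam, including the cyclic wrap, the last coordinate is parked at a turning value $0$ or $q-1$ while the prefix takes one step of $G$, so adjacency is automatic rather than something to be repaired. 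The prefix's quasi-complementarity contributes exactly $q^{n-1}$ to each separation, and the $+1$ in the last coordinate contributes the rest. One caveat that sharpens the comparison: computed carefully, the paper's construction realizes separations $q^{n-1}\pm1$ and $q^{n-1}\pm(q-1)$ (odd blocks produce $q^{n-1}-1$ and $q^{n-1}+(q-1)$), so it establishes the theorem in the ``error at most $q-1$'' sense stressed in the remark following it, not the literal two-value form; your skeleton is predicated on the literal form, which --- had your seams been repairable --- would have been a strictly stronger result.
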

\begin{proof}
By Corollary~\ref{cor:neven} there exists a quasi-complementary Lee metric Gray code, $G$, of $q$-ary $(n-1)$-tuples.  Let $G(i)$ denote the $i$th word in $G$.  Define the code, $G'$ on $n$-tuples by

\[
G'(i) = \left \{ \begin{array}{ll} G(\lfloor i/q \rfloor) (i \bmod q)  & \mbox{ if } \lfloor i/q \rfloor \equiv 0 \bmod 2    \\
G(\lfloor i/q \rfloor) ((q-1-i) \bmod q)  & \mbox{ if } \lfloor i/q \rfloor \equiv 1 \bmod 2 \end{array}
      \right .
\]
\end{proof}

It is an unknown if the absolute value of the change (``error'') in the separation could be bounded by something smaller than $q-1$ but greater than 1.

We can also show that if we ask that consecutive words be of distance one in the Hamming metric then we can construct quasi-complementary Gray codes regardless of the parity of $n$ and $q$.  Because distance 1 in the Lee metric implies distance 1 in the Hamming metric, the only case left to prove is the case of $n$ odd and $q$ even.  Again we construct an ingredient code for use with Lemma~\ref{lem:construct}.

\begin{lemma} \label{lem:evenq3}
If $n'$ is even then there exists a Hamming metric Gray code of $q$-ary $n'$-tuples that starts with $00\cdots0$ and ends with $11\cdots1$ for any even $q \geq 4$.
\end{lemma}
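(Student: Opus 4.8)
The plan is to build the ingredient required by Lemma~\ref{lem:construct} by peeling off the first coordinate and filling in the remaining $n'-1$ coordinates with shifted copies of a single suffix code, in the spirit of Lemma~\ref{lem:oddq}, but with one twist forced by the parity of $q$. Since $n'$ is even, $n'-1$ is odd, so Lemma~\ref{lem:oddn} supplies a Lee metric Gray code $G$ of $q$-ary $(n'-1)$-tuples running from $00\cdots0$ to $11\cdots1$; as distance one in the Lee metric implies distance one in the Hamming metric, $G$ is in particular a Hamming metric Gray code. For $a\in\mathbb{Z}_q$ write $G+a$ for the code obtained by adding $a$ to every coordinate of every word of $G$. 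Because adding a constant is an automorphism of the Hamming graph $K_q\Box\cdots\Box K_q$, each $G+a$ is again a Hamming Gray code, now running from the constant word $aa\cdots a$ to $(a{+}1)(a{+}1)\cdots(a{+}1)$.

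First I would assemble a code in $q$ prefix-blocks. Block $j$ carries a fixed first coordinate $\rho_j$, where $\rho_0,\ldots,\rho_{q-1}$ is a permutation of $\mathbb{Z}_q$ to be chosen, and in its remaining coordinates runs through the translate $G+c_j$ for a running constant $c_j$. If block $j$ ends at the constant suffix $(c_j{+}1)\cdots(c_j{+}1)$ and block $j+1$ begins at that same constant suffix, then the two boundary words differ only in the first coordinate ($\rho_j$ versus $\rho_{j+1}$), a single Hamming change; so the blocks glue into one Hamming code as long as I keep $c_{j+1}=c_j+1$. Taking $c_0=0$ and $\rho_0=0$ starts the code at $00\cdots0$, and since the prefixes are distinct the $q$ rows are disjoint while each is a full suffix Gray code, so all $q^{n'}$ words appear exactly once.

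The obstacle is the endpoint. With every block advancing the running constant by $+1$, after $q$ blocks the suffix constant has advanced by $q\equiv 0\pmod q$, so the code closes at a word of the form $\rho_{q-1}00\cdots0$ rather than $11\cdots1$; intrinsically, over $q$ blocks any sequence of $\pm1$ advances sums to a quantity of the same parity as $q$, which is even, so a net advance of exactly $1$ is impossible while $q$ is even. I would break this parity by letting the first block advance by $+2$. Fix a permutation $\pi$ of $\mathbb{Z}_q$ with $\pi(0)=0$ and $\pi(1)=2$ (possible since $q\geq 3$) and let $G^{\pi}$ be $G$ with $\pi$ applied to every symbol; as a coordinatewise relabelling this is again a Hamming Gray code, now from $00\cdots0$ to $22\cdots2$. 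Using $G^{\pi}$ in the first block (so $c_1=2$) and the ordinary translates $G+c_j$ in the other $q-1$ blocks makes the total advance $q+1\equiv 1\pmod q$, so the final suffix constant is $11\cdots1$; choosing the prefix permutation so that $\rho_{q-1}=1$ then lands the last word on $11\cdots1$ exactly, while $\rho_0=0$ still starts it at $00\cdots0$. The gluing is unaffected, since it only requires consecutive blocks to share their common boundary suffix, which still holds with $c_{j+1}$ defined as $c_j$ plus the advance of block $j$. The only points needing care are that each boundary word be a single Hamming step (guaranteed because consecutive blocks meet at a shared suffix and differ only in the prefix) and that the $+2$ path $G^{\pi}$ genuinely exists, which the automorphism argument secures. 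I expect the verification of the running constants and of the start and end words to be routine bookkeeping, and the single genuine idea to be the $+2$ block that defeats the even-$q$ parity obstruction.
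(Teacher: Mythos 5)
Your construction is correct, but it takes a genuinely different route from the paper's. You peel off the \emph{first} coordinate and build the code from $q$ prefix-blocks, each block running a translate $G+c_j$ of the Lemma~\ref{lem:oddn} ingredient on the suffix coordinates (in the spirit of Lemma~\ref{lem:oddq}), and you defeat the even-$q$ parity obstruction with one symbol-relabelled block $G^{\pi}$ ($\pi(0)=0$, $\pi(1)=2$) whose net advance is $+2$; the bookkeeping (total advance $q+1\equiv 1 \bmod q$, prefixes a permutation with $\rho_0=0$, $\rho_{q-1}=1$, block boundaries glued along a common constant suffix) all checks out, and every step is indeed a single Hamming change. The paper instead peels off the \emph{last} coordinate: it takes the same ingredient $G$ from Lemma~\ref{lem:oddn} and appends a boustrophedon sweep $0,1,\ldots,q-1$ then $q-1,\ldots,1,0$ synchronized with the index of $G$, which produces a genuine \emph{Lee} metric Gray code ending at $11\cdots10$ (the final sweep is downward because $q^{n'-1}$ is even), and then simply swaps the last two words $11\cdots11$ and $11\cdots10$; only that single swapped transition fails the Lee property. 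The trade-off: the paper's code is Lee everywhere except one step and reuses a construction formula appearing elsewhere in the paper, while your block decomposition is more explicit about \emph{why} even $q$ obstructs a pure translate construction (the $\pm1$ advances over $q$ blocks cannot sum to $1$) and how a relabelling circumvents it, at the cost of abandoning the Lee property in many transitions --- which is harmless, since the lemma only claims a Hamming metric code.
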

\begin{proof}
By Lemma~\ref{lem:oddn} there exists a Lee metric Gray code, $G$, of $q$-ary $(n'-1)$-tuples that starts with $00\cdots0$ and ends with $11\cdots1$.  Let $G(i)$ be the $i$th word in this code.  Construct the desired code $G'$ in the following manner
\[
G'(i) = \left \{ \begin{array}{ll} G(\lfloor i/q \rfloor) (i \bmod q)  & \mbox{ if } \lfloor i/q \rfloor \equiv 0 \bmod 2    \\
G(\lfloor i/q \rfloor) ((q-1-i) \bmod q)  & \mbox{ if } \lfloor i/q \rfloor \equiv 1 \bmod 2 \end{array}
      \right .
\]
Now swapping $G'(q^n-2)$ and $G'(q^n-1)$ produces the desired code.
\end{proof}

\begin{cor}
There exists a  quasi-complementary Hamming metric Gray code of $q$-ary $n$-tuples whenever $n\geq 1$ and $q \geq 3$.
\end{cor}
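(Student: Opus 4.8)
The plan is to funnel every case into Lemma~\ref{lem:construct}, which manufactures a quasi-complementary code of $q$-ary $n$-tuples out of an ingredient Gray code of $(n-1)$-tuples that runs from $00\cdots0$ to $11\cdots1$. The organizing observation is that distance $1$ in the Lee metric forces distance $1$ in the Hamming metric, so every Lee metric existence statement already proved transfers verbatim to the Hamming setting; in particular, a quasi-complementary Lee code is automatically a quasi-complementary Hamming code. I would therefore first clear away the cases we already own: Corollary~\ref{cor:neven} settles all even $n$, the corollary following Lemma~\ref{lem:oddq} settles all odd $q \geq 3$, and the trivial $n=1$ code handles that degenerate value.

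What survives is exactly $n$ odd with $n \geq 3$ together with $q$ even; since $q \geq 3$, evenness forces $q \geq 4$. Here I would invoke Lemma~\ref{lem:evenq3}: because $n$ is odd, $n' = n-1$ is even, so that lemma produces a Hamming metric Gray code of $q$-ary $(n-1)$-tuples beginning at $00\cdots0$ and ending at $11\cdots1$. Feeding this ingredient into Lemma~\ref{lem:construct} then yields the desired quasi-complementary Hamming metric Gray code of $q$-ary $n$-tuples, completing the final case.

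The corollary itself is thus essentially a bookkeeping exercise: the only thing to verify is that the split into $n$ even, $q$ odd, and ($n$ odd, $q$ even) is genuinely exhaustive and that the correct ingredient lemma is available in each cell. The real work — and the place I expect any difficulty to hide — sits upstream in Lemma~\ref{lem:evenq3}, whose construction (the column-by-column expansion of a Lee code from Lemma~\ref{lem:oddn}, followed by the endpoint-repairing swap of $G'(q^n-2)$ and $G'(q^n-1)$) is what supplies the only new ingredient this corollary actually needs.
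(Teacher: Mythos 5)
Your proof is correct and follows essentially the same route as the paper: the paper likewise notes that Lee distance $1$ implies Hamming distance $1$ (disposing of the $n$ even and $q$ odd cases via the earlier Lee metric corollaries), and then handles the one remaining case, $n$ odd and $q$ even, by feeding the Hamming metric ingredient code of Lemma~\ref{lem:evenq3} into Lemma~\ref{lem:construct}. Your case split and choice of lemmas match the paper exactly.
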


% done

\section{$n$-subsets of a $2n$-set}

The complement of an $n$-subset of a $2n$-set is itself an $n$-subset.  This allows us to sensibly define a complementary Gray code for these objects.  The weakest form of a Gray code for fixed size subsets of an $n$-set is that where the symmetric difference of consecutive sets has cardinality two; this corresponds to consecutive incidence vectors having Hamming distance two.  However, in the Gray code literature of $k$-subsets of an $n$-set, it is customary to ask for stronger ``closeness'' properties \cite{savage:97}.  The {\em strong minimal change property} asks that consecutive subsets, when written as sets in lexicographic order, be Hamming distance one; this is equivalent to changing two bit positions in the incidence vectors which must have only `0's between them.  When there are more than 2 bit positions then this property is only preserved by complementation if the two elements changing are consecutive or, equivalently, that the pair of bits that change are adjacent in the incidence vector.  This is known as the {\em adjacent interchange property}. Such Gray codes of $k$-subsets of an $n$-set exist if and only if $k = 0, 1, n$ or $n - 1$ or $n$ is
even and $k$ is odd \cite{savage:97}.  Additionally these codes cannot be cyclic because the incidence vectors $1^k0^{n-k}$ and $0^{n-k}1^k$ have degree one in the corresponding transition graph.  Since complementary Gray codes must be cyclic we cannot hope for the {\em adjacent interchange property}.

We will modify the {\em strong minimal change property} to be preserved under complementation and look for Gray codes with this new property.  
\begin{defn}
A Gray code of $k$-subsets of an  $n$-set is said to have the {\em complementary strong minimal change property} if the elements of each consecutive pair of subsets differ in the exchange of two elements such that all the elements between the two either appear in both sets or appear in neither set.  In the incidence vector representation, this is equivalent to the two bit positions which change having either all `1's or all `0's between them.
\end{defn}
We note that two incidence vectors which differ by a {\em complementary strong minimal change} have Hamming distance two and also differ by the reversal of a subword, the word between and including the two positions which change.  Subword reversal is one of the ways that Frank Ruskey presents the various stronger change properties of $k$-subsets of an $n$-set\cite{MR1288775}.  The simplicity of describing the {\em complementary strong minimal change} by subword reversals and reversals' prior use in the literature makes this new minimal change property natural.  The strongest form of the {\em complementary strong minimal change} would only permit two bits to change if they were adjacent or had a single bit position between them.  The existence of Gray codes meeting this strong notion of a complementary is an open and interesting question.

We will construct these complementary Gray codes in a manner very similar to the method of Lemma~\ref{lem:construct}.  That is, we will find a Gray code for the $n$-subsets of a $(2n-1)$-set such that the first and last sets are as complementary as possible.  We will then append a 0 to each of the incidence vectors.  The complement of the first will now be Hamming distance two from the last and so we complete the code by running through the complements of each incidence vector from the list we constructed.  
\begin{thrm} There exists a complementary Gray code for the $n$-subsets of a $2n$-set with the {\em complementary strong minimal change property}.
\end{thrm}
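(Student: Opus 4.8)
The plan is to realize the recipe stated just before the theorem, reducing everything to a single ``ingredient'' list on half as many sets. Write $M=\binom{2n-1}{n}$, so the target code has length $2M=\binom{2n}{n}$. I would first produce a list $L=(w_1,\dots,w_M)$ of the $n$-subsets of the $(2n-1)$-set that has the complementary strong minimal change property throughout and whose endpoints are ``as complementary as possible,'' namely $w_1=1^n0^{n-1}$ (the set $\{1,\dots,n\}$) and $w_M=0^{n-1}1^n$ (the set $\{n,\dots,2n-1\}$). From $L$ I build
\[
\Gamma=\bigl(w_1 0,\,w_2 0,\,\dots,\,w_M 0,\ \overline{w_1}\,1,\,\overline{w_2}\,1,\,\dots,\,\overline{w_M}\,1\bigr),
\]
whose second half is the bitwise complement of the first half, listed in the same order. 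Then $\overline{w_i 0}=\overline{w_i}\,1$ occupies position $M+i$, exactly $M$ (half of $2M$) steps after $w_i 0$, so $\Gamma$ is complementary by construction and contains every $n$-subset of the $2n$-set exactly once.

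Next I would verify the transitions. Inside the first half the step $w_i0\to w_{i+1}0$ alters precisely the two bits that changed in $L$ and does not disturb the bits between them, so it inherits the complementary strong minimal change property from $L$. The key structural point---and the very reason the property was defined through all-$0$/all-$1$ interiors---is that it is invariant under bitwise complementation, since complementing trades an all-$0$ interior for an all-$1$ interior and vice versa. Consequently every step in the second half is legal, being the complement of a legal first-half step, and one gets those steps for free. Only the two seams need direct checking. With the chosen endpoints, $w_M 0=0^{n-1}1^n0$ and $\overline{w_1}\,1=0^{n}1^{n}$ differ exactly in positions $n$ and $2n$, and all bits strictly between them are $1$; this is a single complementary strong minimal change. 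The closing step $\overline{w_M}\,1\to w_1 0$ is the complement of this seam, hence also legal, which simultaneously shows $\Gamma$ is cyclic, as every complementary code must be.

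The remaining and genuinely substantive task is constructing $L$, and I expect this to be the main obstacle rather than the gluing above. I would obtain $L$ as a special case of a strengthened claim proved by induction: for every $N$ and $k$ there is a complementary strong minimal change Gray code of the $k$-subsets of an $N$-set with a prescribed, recursively convenient pair of endpoints. The induction would use the split $\binom{2n-1}{n}=\binom{2n-2}{n-1}+\binom{2n-2}{n}$ (conditioning on membership of the last element): list the subsets containing it as $w'1$ and those omitting it as $w'0$, each block supplied by a smaller instance, and then splice the two blocks with a single complementary strong minimal change at the junction. The delicate part, which I take to be the technical heart of the proof, is the endpoint bookkeeping: each recursive call must hand back its first and last sets in positions that both permit this single-change splice across the appended $0/1$ and propagate the global requirement $w_1=1^n0^{n-1}$, $w_M=0^{n-1}1^n$ down the recursion. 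The base cases $n=1,2$ are immediate, and one checks directly that the resulting $\Gamma$ comes out correct there.
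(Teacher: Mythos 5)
Your gluing argument is correct and is exactly the paper's: take a list $L$ of the $n$-subsets of a $(2n-1)$-set with the right property and the right endpoints, append a $0$ to every word, and finish with the complements of the first half. Your seam check is right ($0^{n-1}1^n0$ and $0^n1^n$ differ in positions $n$ and $2n$ with an all-$1$ interior), and the observation that the complementary strong minimal change property is invariant under complementation, so the second half is legal for free, is the same observation the paper relies on. The problem is that the theorem's entire content is the existence of the ingredient $L$, and you do not construct it: you only sketch an induction and explicitly defer ``the technical heart of the proof.''

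The paper closes that gap with a citation rather than a construction: the Eades--McKay Gray code for $k$-subsets of an $m$-set has the \emph{strong} minimal change property (the two changed positions have only $0$s between them, which is a special case of the complementary property) and runs from $0^{m-k}1^k$ to $1^k0^{m-k}$; with $k=n$, $m=2n-1$ this is precisely your $L$, up to reversal. Your proposed substitute --- induction on $\binom{2n-1}{n}=\binom{2n-2}{n-1}+\binom{2n-2}{n}$, splitting on the last element and splicing two recursively obtained blocks --- cannot be accepted as it stands, because the ``endpoint bookkeeping'' you wave at is the whole difficulty: you would need to specify, for every $N$ and $k$, which endpoint pairs are achievable, verify that the single junction between the two blocks is a legal complementary strong minimal change, and show the required pairs propagate down the recursion. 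That amounts to re-proving (a variant of) the Eades--McKay theorem, itself a nontrivial published result, and the known constructions of such codes use a finer recursion (conditioning on the last \emph{two} elements) precisely because a single two-block splice with prescribed endpoints does not go through straightforwardly. So either invoke Eades--McKay, as the paper does, or actually carry out your induction with the endpoint invariant stated and proved; as written, the proposal is incomplete at its self-acknowledged crux.
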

\begin{proof}
The Eades and McKay {\em strong minimal change property} Gray code for the $k$-subsets of an $m$-set has the property that the first incidence vector is $0^{m-k}1^k$ and the final vector is $1^k0^{m-k}$\cite{eades:84,MR1288775}.  We use this code with $k=n$ and $m=2n-1$. If we append a 0 to all vectors then the first is $0^{n-1}1^n0$ and its complement, $1^{n-1}0^n1$ is a single strong minimal change from the final vector $1^n0^n$.  So we simply finish the code with the complements of all the vectors (sets) from the first half.
\end{proof}
Table~\ref{3of6} shows a complementary Gray code for the 3-subsets of a 6-set with the complementary strong minimal change property represented as subsets.  Table~\ref{3of6a} shows the same code represented as incidence vectors.
\begin{table}[ht]
\begin{center}
\begin{tabular}{cccccccccc}
234 & 235 & 245 & 345 & 346 & 246 & 236 & 256 & 356 & 456 \\
156 & 146 & 136 & 126 & 125 & 135 & 145 & 134 & 124 & 123 \\
\end{tabular}
\caption{A complementary Gray code for the 3-subsets of a 6-set with the complementary strong minimal change property represented as subsets. \label{3of6}}
\end{center}
\end{table}

\begin{table}[ht]
\begin{center}
\begin{tabular}{cccccc}
011100  &   011010  &   010110  &   001110  &   001101  &   010101  \\
011001  &   010011  &   001011  &   000111  &   100011  &   100101  \\
101001  &   110001  &   110010  &   101010  &   100110  &   101100  \\
110100  &   111000 \\
\end{tabular}
\caption{A complementary Gray code for the 3-subsets of a 6-set with the complementary strong minimal change property represented as incidence vectors. \label{3of6a}}
\end{center}
\end{table}

\section{Permutations}

In 1974 Martin Gardner asked the question ``True or false: If $a_1a_2\ldots a_n$ is initially $12\ldots n$, Algorithm P [``Plain Changes'', or the Trotter-Johnson algorithm for an adjacent interchange Gray code of permutations \cite{MR0159764,trotter:62}] begins by visiting all $n!/2$ permutations in which 1 precedes 2; then the next permutation is $n\ldots21$.''\cite{MR2251595}.  This statement is true when $n \leq 4$ but it fails for all greater $n$ because after $n=4$, $(n-1)!/2$ is even and so the larger symbols, $n,\, n-1,\ldots$ are on the right hand side of the permutation after $n!/2$ steps, not on the left.  Seemingly implicit in this question and its false answer is the problem of determining if there exists an adjacent interchange Gray code for permutations which has the property that it begins by listing ``all $n!/2$ permutations in which 1 precedes 2; then the next permutation is $n\ldots21$.''  Such a Gray code is the most natural candidate for a complementary (really ``reverse'')  Gray code for permutations; by completing the second half of the code with the reversals of the elements from the first $n!/2$ permutations, it has the property that the $(i + n!/2)$th permutation is the reversal of the $i$th.  Reversal is a better extension of ``complement'' than the algebraic inverse because some permutations are their own inverses and this is an obstruction to the existence of an ``inverse'' Gray code.  

Considering the parity of permutations and the length of a Gray code of permutations, it is easy to generate simple necessary conditions.
\begin{prop}
If a reverse adjacent interchange Gray code for permutations of $12\ldots n$ exists for $n \geq 5$ then $n \equiv 0,1 \bmod 4$.
\end{prop}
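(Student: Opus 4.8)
The plan is to run a sign (permutation-parity) argument directly analogous to the weight-parity arguments used earlier for binary and Lee-metric codes. To each permutation $\sigma$ I attach its sign $\mathrm{sgn}(\sigma)=\pm1$, according to whether $\sigma$ is even or odd. The first observation is that an adjacent interchange swaps the entries in two adjacent positions and is therefore a single transposition, so it flips the sign. Hence, writing $G(i)$ for the $i$th permutation of the code, consecutive words have opposite sign and $\mathrm{sgn}(G(i)) = (-1)^{i}\,\mathrm{sgn}(G(0))$ for all $i$.

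Next I would compute how reversal acts on the sign. Reversing a permutation word $\sigma = \sigma_1\sigma_2\cdots\sigma_n$ to $\sigma_n\cdots\sigma_2\sigma_1$ amounts to composing $\sigma$ with the order-reversing permutation $\rho$ given by $\rho(i)=n+1-i$. Since $\rho$ is strictly decreasing, every one of its $\binom{n}{2}$ pairs is an inversion, so $\mathrm{sgn}(\rho)=(-1)^{n(n-1)/2}$ (equivalently $\rho$ is a product of $\lfloor n/2\rfloor$ disjoint transpositions). Thus reversal multiplies the sign by the fixed constant $(-1)^{n(n-1)/2}$, independent of the particular $\sigma$.

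Finally I would combine these facts with the defining reverse property, that the $(i+n!/2)$th permutation is the reversal of the $i$th, i.e. $G(i+n!/2)=G(i)^{R}$. Taking signs of both sides and using the two observations gives
\[
(-1)^{\,i+n!/2}\,\mathrm{sgn}(G(0)) = (-1)^{n(n-1)/2}\,(-1)^{\,i}\,\mathrm{sgn}(G(0)),
\]
which collapses to $(-1)^{n!/2}=(-1)^{n(n-1)/2}$. For $n\geq5$ the integer $n!/2$ is even, so the left-hand side is $+1$ and the identity forces $n(n-1)/2$ to be even. As $n(n-1)$ is a product of two consecutive integers, $n(n-1)/2$ is even exactly when $4\mid n(n-1)$, that is when $n\equiv0$ or $n\equiv1\bmod4$, which is the desired conclusion.

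I expect no serious obstacle here, since the whole argument is just a careful accounting of two independent sign contributions---the alternation coming from the adjacent transpositions and the fixed twist coming from reversal. The only points that need care are verifying that $n!/2$ is even throughout the stated range (so that the transposition alternation contributes trivially across a half-period of the code) and correctly evaluating $\mathrm{sgn}(\rho)$; both are routine.
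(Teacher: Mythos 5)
Your proposal is correct and matches the paper's intended argument: the paper offers no explicit proof, saying only that the result follows from "the parity of permutations and the length of a Gray code of permutations," which is exactly your sign-alternation count combined with $\mathrm{sgn}(\sigma^{R}) = (-1)^{n(n-1)/2}\,\mathrm{sgn}(\sigma)$ and the evenness of $n!/2$ for $n\geq 4$.
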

We show that these necessary conditions are sufficient:
\begin{thrm} \label{rev_perm}
A reverse adjacent interchange Gray code for permutations of $12\ldots n$ exists if and only if $n \leq 4$ or $n \equiv 0,1 \bmod 4$.
\end{thrm}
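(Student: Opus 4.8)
The plan is to treat necessity and sufficiency separately. Necessity is already the preceding Proposition, so I would only recall the mechanism as orientation: each adjacent interchange reverses the parity of a permutation, so the words at positions $i$ and $i+n!/2$ of any adjacent interchange Gray code share a parity exactly when $n!/2$ is even, which holds for all $n\ge 4$; on the other hand, writing $w^{R}$ for the string reversal of $w$, we have $w^{R}=w\circ\rho$ where $\rho(i)=n+1-i$ has sign $(-1)^{\binom n2}$, so $w$ and $w^{R}$ agree in parity iff $\binom n2$ is even, i.e. iff $n\equiv 0,1\bmod 4$. For $n\ge5$ the two parities must coincide, forcing $n\equiv0,1\bmod4$, while for $n\le4$ the codes are exhibited by hand (matching Gardner's observation).

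For sufficiency I would realise the reverse code by the same halving idea used for binary $n$-tuples on Page~\pageref{page:n_even}. The aim is to produce an adjacent interchange Hamilton path $P$ through the $n!/2$ permutations in which $1$ precedes $2$, starting at $s=12\cdots n$ and ending at $t=n(n-1)\cdots312$; the full code is then $P$ followed by $P^{R}$, the reversals of the words of $P$ taken in the same order. Three checks make this work: consecutive words inside $P^{R}$ inherit adjacency from $P$, since reversing a string carries an adjacent interchange to an adjacent interchange; the join from the last word $t$ of $P$ to the first word $s^{R}=n\cdots21$ of $P^{R}$ is the single interchange of the final two symbols; and the closing join from $t^{R}$ back to $s$ is the reversal of that same interchange, so the code is automatically cyclic. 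Because reversal sends the half with $1$ before $2$ to the half with $2$ before $1$, the word $P(i)^{R}$ sits exactly $n!/2$ positions after $P(i)$, which is the reverse property.

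The remaining task, constructing $P$, I would phrase as a multiset Gray code. Merging the symbols $1$ and $2$ into a single repeated symbol gives a bijection between permutations of $\{1,\dots,n\}$ with $1$ before $2$ and permutations of the multiset $\{1,1,3,4,\dots,n\}$; under it an adjacent interchange of two distinct symbols corresponds to an adjacent interchange, while interchanging the two equal symbols (the only forbidden move) is exactly the move that would swap the order of $1$ and $2$. Thus $P$ is equivalent to an adjacent interchange Hamilton path on this multiset from $1\,1\,3\cdots n$ to its reversal $n\cdots3\,1\,1$, and this is the content of the cyclic multiset Gray code lemma advertised in the abstract: I would prove that lemma and then cut its Hamilton cycle at the edge producing these two endpoints.

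The heart of the argument, and the main obstacle, is building this multiset code with the prescribed endpoints. The natural Steinhaus--Johnson--Trotter idea inserts the largest symbol by sweeping it back and forth across each word of a code on the smaller ground set, but the number of base words is $(n-1)!/2$, which is even for $n\ge5$, so the sweeps deposit the new symbol on the wrong side and the endpoint comes out as $t_{n-1}\,n$ rather than $n\,t_{n-1}$; this is precisely Gardner's failure of the plain algorithm for $n\ge5$. I would therefore build the code by a recursion adapted to the residue of $n$ modulo $4$ — inserting symbols in a pattern (for instance in blocks, or with a local rerouting near the two endpoints) chosen so that the sweep-direction parity leaves the largest symbol on the correct side exactly when $\binom n2$ is even — anchored at the hand-built cases $n=4,5$. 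Controlling the endpoint parity through the insertion so that it aligns with the necessary condition $n\equiv0,1\bmod4$ is where essentially all the difficulty lies; once $P$ is in hand, the reversal construction above finishes the proof.
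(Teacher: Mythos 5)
Your outer framework is sound and in fact coincides with the paper's: necessity by the parity count (the paper's preceding Proposition), and sufficiency by building an adjacent interchange Hamilton path $P$ through the $n!/2$ permutations in which $1$ precedes $2$, from $12\cdots n$ to $n(n-1)\cdots 312$, then appending the word-by-word reversals $P^{R}$; your verification that the two joins $t\to s^{R}$ and $t^{R}\to s$ are single adjacent interchanges and that the reverse property holds is correct. The genuine gap is that you never construct $P$, and constructing $P$ is essentially the whole paper. The paper does it by a two-step induction: Theorem~\ref{0mod4} passes from order $n-3$ (with $n\equiv 0\bmod 4$) to order $n$ by interleaving three ingredient codes (a reverse code of order $n-3$, an adjacent-interchange code of $3$-subsets of an $n$-set, and the cyclic code of permutations of $123$), and then finishes with an explicitly constructed Hamilton path in an auxiliary graph $\Gamma_n$ (Tables~\ref{n4}, \ref{recursion}, \ref{n6}), arranging along the way a technical endgame condition, property $P$ (Table~\ref{propP}); Theorem~\ref{1mod4} then passes from order $n-1$ (with property $P$) to order $n\equiv 1\bmod 4$ by a Plain Changes sweep whose final stretch is rerouted by hand (Table~\ref{tab:1mod4}). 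Your substitute for all of this --- ``build the code by a recursion adapted to the residue of $n$ modulo $4$\,\ldots\,chosen so that the sweep-direction parity leaves the largest symbol on the correct side'' --- is a restatement of the difficulty, not a resolution of it: you correctly note that the naive single-symbol sweep provably fails for $n\ge 5$ (Gardner's observation), so some nontrivial rerouting is forced, and specifying that rerouting is exactly the content you have omitted.

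Moreover, the one concrete tool you invoke to fill the gap does not apply. The ``cyclic multiset Gray code lemma advertised in the abstract'' concerns permutations of the multiset $\{1,2,3,3,\ldots,3\}$ (two distinguished symbols among $n-2$ identical ones, arising as the $b=0$ layer of $\Gamma_n$); it is a byproduct of the paper's construction, not an ingredient, and it says nothing about your multiset $\{1,1,3,4,\ldots,n\}$, which is a different family (one repeated symbol, $n-2$ distinct ones). Even setting that aside, cutting a Hamilton cycle at an edge yields a path whose two endpoints are \emph{adjacent} in the graph, whereas your required endpoints $113\cdots n$ and $n\cdots 311$ are full reversals of one another and are at adjacent-transposition distance far greater than one; no Hamilton cycle can be cut to produce them. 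What you need is a Hamilton path between two prescribed, non-adjacent vertices, a strictly stronger statement than any cyclic code provides --- and proving it is precisely where the paper's Theorems~\ref{0mod4} and~\ref{1mod4} do their work.
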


  When $n \equiv 0 \bmod 4$, we will need our codes to have a particular property as they approach the permutation $n\ldots21$, the reversal of the first permutation, namely that the permutations just previous to this all have the symbols $1,2,\ldots, n-1$ in the order $(n-1)(n-2) \cdots 312$ and the symbol $n$ in the four left-most positions moving from right to left as shown in Table~\ref{propP}. We call this property $P$.  We note that when $n \leq 7$ the positions of $(n-1)$ will include positions to the right of the $1$ and $2$. 
\begin{table}[ht]
\begin{eqnarray*}
&& (n-1)(n-2)(n-3)n(n-4)\ldots312 \\
&& (n-1)(n-2)n(n-3)(n-4)\ldots312 \\
&& (n-1)n(n-2)(n-3)(n-4)\ldots312 \\
&& n(n-1)(n-2)(n-3)(n-4)\ldots312;
\end{eqnarray*}
\caption{Required sequence of permutations of order $n$, for Property $P$. \label{propP}}
\end{table}

\begin{thrm} \label{0mod4}
If $n \equiv 0 \bmod 4$, $n \geq 8$ and there exists a reverse Gray code for permutations of $12\ldots(n-3)$ then there exists a reverse Gray code for permutations of $12\ldots n$ with property $P$.
\end{thrm}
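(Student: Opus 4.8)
The plan is to build the $n$-symbol code by inserting the three new largest symbols $n-2,n-1,n$ into the given reverse code $R$ for $12\cdots(n-3)$, one fibre at a time. Write $m=n-3$, so $m\equiv 1\bmod 4$ and $R$ runs $R(0)=12\cdots m,\dots,R(m!-1)$ with $R(i+m!/2)=\mathrm{reverse}(R(i))$ and, by the inductive hypothesis, property $P$. First I would reduce the whole problem to constructing the first half: if I produce an adjacent-interchange Gray path through exactly the $n!/2$ permutations in which $1$ precedes $2$, starting at $12\cdots n$ and ending one adjacent transposition away from $n\cdots 21$, then the second half can be taken to be the coordinatewise reversals of the first half in the same order. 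Reversal sends an adjacent transposition in positions $(j,j+1)$ to one in positions $(n-j,n-j+1)$, so the reversed list is automatically a valid Gray path; it covers exactly the permutations with $2$ before $1$; the junction condition makes the two halves join; and the same condition makes the code cyclic and forces $G(i+n!/2)=\mathrm{reverse}(G(i))$. So only the first half, together with property $P$, must be engineered.

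For the first half I would partition the target permutations into fibres indexed by $\pi\in S_m$: the fibre $F_\pi$ consists of all $n(n-1)(n-2)$ ways to interleave $n-2,n-1,n$ into the fixed small-subsequence $\pi$. Inserting large symbols never changes the relative order of $1$ and $2$, so the fibres over the first half of $R$ (the $\pi$ with $1$ before $2$) account for exactly $\tfrac12 m!\cdot n(n-1)(n-2)=n!/2$ permutations — precisely the ones I want. A single fibre is a set of permutations of the multiset $\{\,n-2,n-1,n,\ast^{m}\,\}$, where swapping two $\ast$'s is a non-move; the inner engine of the construction is therefore a cyclic adjacent-interchange Gray code for these multiset permutations, which I would establish as a separate lemma and in which the three large symbols can be ``parked'' together at either end of the word. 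The full first half is then obtained by the usual insertion sweep: traverse the first half of $R$, replace each $R(j)$ by a full traversal of its fibre, and choose the entry and exit configurations of consecutive fibres (alternating ends, as in Trotter--Johnson insertion of a single symbol) so that the small-symbol transposition carrying $R(j)$ to $R(j+1)$ is realised, in the full word, by a single adjacent interchange of those two small symbols with no large symbol between them.

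To also obtain property $P$ I would exploit that, by property $P$ of $R$, the first half of $R$ ends at $\sigma:=m(m-1)\cdots 3\,1\,2$. Hence the last fibre visited is $F_\sigma$, and the four permutations of Table~\ref{propP} — which all have small-subsequence $\sigma$, have $n-1,n-2$ parked at the front in that order, and differ only by sliding $n$ leftward from just right of the first small symbol to the leftmost position (three adjacent transpositions) — are four consecutive elements of $F_\sigma$. I would therefore arrange the multiset Gray code so that $F_\sigma$ terminates with exactly this leftward sweep of $n$, ending at $n(n-1)(n-2)\sigma$, which is one adjacent transposition (of the last two symbols $1,2$) away from $n\cdots 21$; this supplies the junction condition of the first paragraph. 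The hypothesis $n\ge 8$ guarantees that the positions of $n-1,n-2,n-3$ do not collide with those of $1,2$, so the configurations of Table~\ref{propP} are genuine and distinct.

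The main obstacle is the simultaneous coordination in the second and third paragraphs: designing the fibre (multiset) Gray code so that it has \emph{both} enough freedom in its entry/exit configurations to make every fibre-to-fibre transition a legal adjacent interchange across the whole sweep, \emph{and} the specific terminal sweep of $n$ required by property $P$, all while remaining compatible with the reversal symmetry that produces the second half. Concretely, I would verify that reversing the word and reversing the traversal order of each fibre are consistent — that is, that the fibre code over $\mathrm{reverse}(\pi)$ may be taken to be the mirror of the one over $\pi$ — so that mirroring the first macro-half yields exactly the second macro-half, the parking ends flip in step with the mirrored transitions of $R$, and no adjacency is broken at the central junction $R(m!/2-1)\to R(m!/2)$. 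Establishing the multiset Gray code lemma with these endpoint and symmetry properties is the real content; once it is in hand, the sweep, the parity and reversal bookkeeping, and the verification of property $P$ are routine.
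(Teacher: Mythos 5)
Your overall architecture --- build only the first half (all permutations with $1$ before $2$), end it one adjacent transposition away from $n\cdots21$, and append the coordinatewise reversals --- is exactly the paper's plan, and your reduction in the first paragraph is sound. The genuine gap is that you have deferred essentially the entire content of the theorem to an unestablished ``separate lemma'': an adjacent-interchange Gray code for the permutations of the multiset $\{n-2,n-1,n,\ast^{m}\}$ with prescribed entry and exit configurations, fibre-to-fibre junction compatibility, and a prescribed terminal sweep of $n$ realizing property $P$. This is not a citable or routine fact; it is the hard part of the theorem. Indeed the paper itself remarks that before this work no cyclic Gray code was known even for permutations of the much simpler multiset $\{1,2,3,3,\ldots,3\}$ --- it obtains one only as a by-product of the very construction you are assuming. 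Note also that your fibre graphs are bipartite (each allowed interchange flips the parity of the sum of the positions of the three large symbols plus the sign of their relative order), so Hamilton paths with prescribed ends are subject to parity obstructions of exactly the kind that force the hypothesis $n\equiv 0,1 \bmod 4$ in the first place; such a lemma cannot simply be waved through.

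The way the paper fills this hole also shows why your closing claim that the rest is ``routine'' underestimates the difficulty. The paper traverses the generic fibres uniformly, by alternating an adjacent-interchange Gray code $G_2$ of the $3$-subsets of an $n$-set (the positions of $n-2,n-1,n$) with steps of the cyclic Gray code $G_3$ of the permutations of three symbols (their relative order), interleaved with steps of the hypothesized order-$(n-3)$ code. But the \emph{final} fibre cannot be handled by the same uniform scheme: its entry configuration is forced by the sweep, while its exit must be the property-$P$ tail of Table~\ref{propP} landing adjacent to $n\cdots21$. The paper treats this last fibre by a different mechanism altogether: it runs Plain Changes on the three largest symbols until $(n-2)$ is pinned next to $(n-3)$, encodes the remaining permutations as the graph $\Gamma_n$, and constructs an explicit Hamilton path there by a recursion (Table~\ref{recursion}) requiring separate base cases ($n=2$, a ``false'' path for $n=4$ in Table~\ref{n4}, and a modified case $n=6$ in Table~\ref{n6}); property $P$ is then read off from the tail of that path. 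Until you supply an analogue of this endgame --- an actual construction with its base cases and parity bookkeeping --- your proposal is a correct outline of the theorem rather than a proof of it.
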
 
\begin{proof}
We construct the desired Gray code from three different codes
\begin{itemize}
\item $G_1$, the hypothesized Gray code of order $n-3$, 
\item $G_2$, an {\em adjacent interchange property} Gray code of the $3$-subsets of an $n$ set (which exists because $n$ is even and 3 is odd \cite{savage:97}), and 
\item  $G_3$, the unique (up to reversal and starting position) cyclic Gray code for permutations of $123$, shown in Table~\ref{3perm}
\end{itemize}
\begin{table}[ht]
\begin{center}
\begin{tabular}{llllll}
123 & 213 & 231 & 321 & 312 & 132 \\
\end{tabular}
\end{center}
\caption{The cyclic Gray code of permutations of $123$. \label{3perm}}
\end{table}

We start with the permutation $123\ldots (n-3) (n-2) (n-1) n$ and apply the various Gray codes to this word in the following manner.  We first run $G_2$ on the positions of the symbols $(n-2)$, $(n-1)$ and $n$.  If these start in the right three positions they will end in the left three positions and vice versa.  In either case they will remain in the same relative order to each other.  We then apply the next transformation from $G_3$, on the all-adjacent symbols $(n-2)$, $(n-1)$ and $n$.  We run $G_3$ alternately forwards then backwards assuring that the relative positions of its symbols are either $(n-2) (n-1) n$ or $(n-2) n (n-1)$ depending on the number of iterations of $G_1$ that have passed.  At this point we run through $G_2$ again.  Once we have completed all transformations from $G_3$ with a step of $G_2$ after each one, we perform an iteration of $G_1$ on the all-adjacent symbols $12\ldots(n-3)$ and begin the iterations of $G_3$ again each followed by a step of $G_2$.

Continue in this manner until we produce the permutation
\[
(n-3) (n-4)  \ldots 3 1 2 (n-2) n (n-1).
\]
Note that this is the final iterate from $G_1$ and the odd parity of this in $G_1$ gives both the position of the largest three symbols being at the right and their order. We now run the Plain changes algorithm, moving $(n-1)$ most frequently, $n$ next frequently and $(n-2)$ least frequently, all initially towards the left.  All adjacent interchanges from now on will involve at least one of these symbols.  We run this process until we reach the permutation
\[
(n-3) (n-2) (n-4) (n-5) \ldots 3 1 2 n (n-1).
\]
At this point the only remaining interchanges permitted are $(n-3)$ with $(n-2)$ and interchanges involving one or both of $n$ and $(n-1)$.  There are essentially five permitted moves: 
\begin{itemize}
\item swap $(n-2)$ and $(n-3)$ (they must be adjacent to do this);
\item move $(n-1)$ to the left;
\item move $(n-1)$ to the right;
\item move $n$ to the left and
\item move $n$ to the right.
\end{itemize}
% We call these moves $s$, $y-$, $y+$, $z-$ and $z+$ respectively.

We now translate the problem of finishing this Gray code into finding a suitable  Hamilton path on a specific family of graphs, $\Gamma_n$.  Their vertex sets are
\[
V(\Gamma_n) = \{ (y,z,b) | \mbox{ where } b \in \{0,1\}, 1 \leq y,z \leq n \mbox{ and } y\neq z\}
\]
There is an edge between $(y,z,0)$ and $(y,z,1)$ whenever $y \neq 2$, $z \neq 2$, $(y,z) \neq (1,3), (3,1)$ or $(y,z) = (1,2), (2,1)$. There is an edge between $(y,z,b)$ and $(y\pm1,z,b)$ and $(y,z\pm1,b)$ whenever those vertices are defined.  Finally there is an edge between $(y,y+1,b)$ and $(y+1,y,b)$ whenever this pair of vertices is defined.  

The set of vertices is in bijection to the permutations (with 1 preceding 2) that remain to be added to the Gray code.  The positions of $(n-1)$ and $n$ are encoded by $y$ and $z$ respectively and $b=0$ if $(n-2)$ precedes $(n-3)$ and $b=1$ otherwise.  The edges connect any pair of vertices that differ precisely by an allowed adjacent interchange.  The restrictions on $y$ and $z$ for edges between $(y,z,0)$ and $(y,z,1)$ correspond to the fact that $(n-2)$ and $(n-3)$ cannot be swapped if $n$ or $(n-1)$ are between them in the permutation.  The permutation we have reached corresponds to the vertex $(n,n-1,1)$; for our reverse property we require to finish at $(2,1,0)$ and we must visit each vertex.

We will show that the desired Hamilton path exists for all even $n$, except $4$ (which we do not need as we already have a reverse Gray code for $n=4$).  For $n=2$ the path is 
\[
(2,1,1),\, (1,2,1),\, (1,2,0),\, (2,1,0).
\]
For $n=4$ (this will fail to be a valid Hamilton path in $\Gamma_4$ but it will be useful in the recursive construction) the path is given in Table~\ref{n4}.
\begin{table}[tb]
\begin{center}
\begin{align*}
(4,3,1),\, (4,3,0),\, (4,2,0),\, (4,2,1),\, (3,2,1),\, (2,3,1),\, (2,3,0),\, (3,2,0), \\
(3,1,0),\, (4,1,0),\, (4,1,1),\, (3,1,1),\, (2,1,1),\, (1,2,1),\, (1,3,1),\, (1,4,1), \\
(2,4,1),\, (3,4,1),\, (3,4,0),\, (2,4,0),\, (1,4,0),\, (1,3,0),\, (1,2,0),\, (2,1,0)\phantom{,} 
\end{align*}
\caption{False Hamilton path on $\Gamma_4$. \label{n4}}
\end{center}
\end{table}
To construct the Hamilton path in $\Gamma_n$ in general we will make use of the fact that an isomorphic copy of $\Gamma_{n-4}$ exists centred inside $\Gamma_n$.  We will use the Hamilton path in this $\Gamma_{n-4}$ in the recursion.  The general form of the recursion is given in Table~\ref{recursion}.
\begin{table}[tb]
  \begin{center}
  \parbox{4.2in}{
$(n,n-1,1)$,\hspace*{\fill} $(n-1,n,1)$,\hspace*{\fill} $(n-1,n,0)$,\hspace*{\fill} $(n,n-1,0)$,\hspace*{\fill} $(n,n-2,0)$, \\
$(n-1,n-2,0)$,\hspace*{\fill} $(n-1,n-3,0)$,\hspace*{\fill} $(n,n-3,0)$,\hspace*{\fill} $(n,n-3,1)$,\hspace*{\fill} $(n,n-2,1)$, \\ 
$(n-1,n-2,1)$,\hspace*{\fill} $(n-1,n-3,1)$,\hspace*{\fill} $(n-2,n-3,1)$, {\bf Follow Hamilton}\\
 {\bf path in $\Gamma_{n-4}'$ with $y'=y+2$ and $z' = z+2$. Continue: } \hspace*{\fill}$(4,3,0)$,\\
 $(4,2,0)$, $(5,2,0)$, \ldots, $(n,2,0)$, $(n,3,0)$, \ldots,  $(n,n-4,0)$,\hspace*{\fill}  $(n-1,n-4,0)$,\\
 $(n-1,n-5,0)$, \ldots, $(n-1,3,0)$, \hspace*{\fill} $(n-1,3,1)$, \hspace*{\fill} $(n-1,4,1)$, \ldots, \\
 $(n-1,n-4,1)$, \hspace*{\fill} $(n,n-4,1)$, $(n,n-5,1)$, \ldots, $(n,2,1)$,  \hspace*{\fill} $(n-1,2,1)$, \\
 \ldots, $(3,2,1)$,  \hspace*{\fill}$(2,3,1)$, \hspace*{\fill} $(2,4,1)$, \ldots, $(2,n,1)$, $(3,n,1)$, \ldots, $(n-2,n,1)$, \\
 $(n-2,n-1,1)$, $(n-3,n-1,1)$, \ldots, $(3,n-1,1)$, \hspace*{\fill}$(3,n-1,0)$,\hspace*{\fill} $(4,n-1,0)$, \\
 \ldots, $(n-2,n-1,0)$, \hspace*{\fill}$(n-2,n,0)$,  \hspace*{\fill}$(n-3,n,0)$,  \hspace*{\fill}$(n-4,n,0)$, \ldots, \\
 $(2,n,0)$, \hspace*{\fill}$(2,n-1,0)$, \ldots, $(2,3,0)$, \hspace*{\fill}$(3,2,0)$, \hspace*{\fill}$(3,1,0)$, \hspace*{\fill}$(4,1,0)$, \ldots, \\
 $(n,1,0)$,  \hspace*{\fill} $(n,1,1)$,\hspace*{\fill} $(n-1,1,1)$, \ldots, $(2,1,1)$,\hspace*{\fill} $(1,2,1)$,\hspace*{\fill} $(1,3,1)$, \ldots, \\
 $(1,n,1)$,\, $(1,n,0)$,\, $(1,n-1,0)$, \ldots, $(1,2,0)$,\, $(2,1,0)$
}
\caption{Hamilton path on $\Gamma_n$ (read left to right and top to bottom). \label{recursion}}
\end{center}
\end{table}
The only exception to this pattern is when $n=6$ for which we show the modified portion in Table~\ref{n6}.
\begin{table}[tb]
\begin{align*}
& (6,5,1),\, (5,6,1),\, (5,6,0),\, (6,5,0),\, (6,4,0),\, (6,4,1),\, (5,4,1),\, \\
& (5,4,0),\, (5,3,0),\, (5,3,1),\, (4,3,1),\, \mbox{\bf Follow Hamilton path } \\
&  \mbox{\bf in $\Gamma_{2}'$ with $y'=y+2$ and $z'=z+2$. Continue: } (4,3,0),\, \\
& (4,2,0),\, (5,2,0), (6,2,0),\, (6,3,0),\, (6,3,1),\, (6,2,1),\, (5,2,1),   \\
& (4,2,1),\, (3,2,1),\, (2,3,1),\, (2,4,1), \ldots, (2,n,1),\, (3,n,1), \ldots, \\
& \mbox{\bf The rest is the same as the general recursion for $\Gamma_{n}$}.
\end{align*}
\caption{Modified portion of Hamilton path on $\Gamma_6$ (read left to right and top to bottom). \label{n6}}
\end{table}

The code so far contains all permutations in which $1$ precedes $2$, begins with $12\ldots n$ and ends with $n\ldots312$ which is adjacent to $n\ldots321$, the reversal of the first permutation.  We now complete the code by appending the reversals of the first $n!/2$ permutations.  We note that the structure of the recursion on the $\Gamma_n$ produces a code with property $P$.
\end{proof}
Theorem~\ref{0mod4} uses the existence of codes for $n \equiv 1 \bmod 4$ to construct the code of order $n+3$, the next largest $n \equiv 0 \bmod 4$. We now give Theorem~\ref{1mod4} which completes the induction; it is a construction for proceeding from $n$ to $n+1$ when $n \equiv 0 \bmod 4$.
\begin{thrm} \label{1mod4}
If $n \equiv 1 \bmod 4$, $n \geq 5$ and there exists a reverse Gray code for permutations of $12\ldots(n-1)$ with property $P$, then there exists a reverse Gray code for permutations of $12\ldots n$.
\end{thrm}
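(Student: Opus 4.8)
The plan is to build the first half of the desired order-$n$ code---the $n!/2$ permutations in which $1$ precedes $2$---by inserting the new largest symbol $n$ into the given order-$(n-1)$ code and sweeping it back and forth, and then to take the second half for free as the reversals of the first half. Write $G$ for the hypothesized reverse code of $12\ldots(n-1)$ and let $g_1=12\ldots(n-1),\,g_2,\ldots,g_{(n-1)!/2}$ be its first half, that is, exactly the permutations of $\{1,\ldots,n-1\}$ in which $1$ precedes $2$. Deleting $n$ from a permutation of $\{1,\ldots,n\}$ with $1$ before $2$ returns such a $g_k$, and inserting $n$ in any of the $n$ slots preserves ``$1$ before $2$''; hence the $n!/2$ words got by inserting $n$ into the $g_k$ are precisely the permutations of $\{1,\ldots,n\}$ with $1$ before $2$. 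I would chain them by the plain-changes device: in block $k$ hold $g_k$ fixed and move $n$ by adjacent interchanges from one end of the word to the other, alternating the sweep direction so that consecutive blocks join through the single interchange $g_k\to g_{k+1}$ carried out among the non-$n$ symbols. This join is legal because at a block boundary $n$ sits at an extreme position, hence lies outside the transposed adjacent pair. Starting block $1$ at $g_1$ with $n$ in the rightmost slot produces the required initial word $12\ldots n$.

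The second half is then defined by $\pi_{i+n!/2}=\pi_i^R$. Reversal sends an adjacent interchange at positions $(j,j+1)$ to one at $(n-j,n-j+1)$, so the second half is automatically a valid adjacent-interchange listing; it covers exactly the permutations with $2$ before $1$, and $\pi_{n!/2+1}=\pi_1^R=n\ldots21$ as required. The only real constraints left are that the first half start at $12\ldots n$ (already arranged) and end at a permutation $Q$ that is one adjacent interchange from $n\ldots21$. Since $n\ldots21$ has $2$ before $1$, its unique adjacent-interchange neighbour in which $1$ precedes $2$ is obtained by swapping the trailing $1$ and $2$, namely $Q=n(n-1)(n-2)\ldots312$, in which $n$ occupies the leftmost position.

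Everything thus reduces to making the sweep terminate at $Q$, i.e.\ in the last block, which is based at $g_{(n-1)!/2}=(n-1)(n-2)\ldots312$ precisely because property $P$ of $G$ fixes this final first-half word, with $n$ swept to the \emph{left} end. This is where the hypothesis of property $P$ earns its keep, and where the obstruction lives: for every $n\ge5$ the number of blocks $(n-1)!/2$ is even, so the left/right alternation forced by the block-joining rule would leave $n$ at the \emph{right} end of the final block, the wrong parity to reach $Q$. I would repair this over the last few blocks, where property $P$ pins the base permutations down completely: $g_{(n-1)!/2-3},\ldots,g_{(n-1)!/2}$ all carry $1,\ldots,n-2$ in the fixed order $(n-2)(n-3)\ldots312$ while $n-1$ steps leftward through the four leftmost positions. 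On this explicitly known block of four columns (four base permutations against the $n$ slots of $n$) I would hand-build a Hamilton path entering from $g_{(n-1)!/2-4}$ with $n$ at the right end, visiting all $4n$ words, and exiting at $g_{(n-1)!/2}$ with $n$ at the left, i.e.\ at $Q$. The only forbidden moves are the horizontal steps in which $n$ blocks the interchange of $n-1$ with its neighbour (one blocked slot per adjacent column pair), and the rigidity supplied by property $P$ is exactly what lets one route around them while flipping the parity.

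The main obstacle is this endgame routing. The backbone sweep and the reversal bookkeeping are routine, but the parity of the block count works against us, and the fix must simultaneously flip the left/right parity, keep $1$ before $2$ throughout, and land exactly on $Q=n(n-1)\ldots312$ so that the single interchange into $n\ldots21$ closes the two halves. Property $P$ is what makes the final four columns concrete enough to exhibit such a path. I would treat the small values separately---in particular $n=5$, and any $n$ for which the four leftmost positions swept by $n-1$ overlap the trailing $312$ (the paper's warning about $n\le7$)---verifying the local path by hand where the generic pattern degenerates.
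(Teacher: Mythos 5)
Your proposal is correct and is essentially the paper's own proof: the plain-changes sweep of $n$ through the first half of the order-$(n-1)$ code, the identification of $Q=n(n-1)(n-2)\ldots312$ as the forced terminal word of the first half, the parity obstruction coming from $(n-1)!/2$ being even, and the use of property $P$ to pin down the last four base permutations so that the endgame can be routed explicitly all coincide with the paper's argument. The Hamilton path you defer --- on the $4\times n$ array of (base permutation, position of $n$) pairs with one blocked rung per adjacent pair of columns --- is exactly what the paper exhibits in Table~\ref{tab:1mod4} (the paper enters it one column later, after sweeping $n$ right-to-left over $g_{(n-1)!/2-3}$ in the usual plain-changes manner), and such a path does exist, e.g.\ by sweeping your first column and then zig-zagging through the remaining three, so nothing in your outline breaks.
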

\begin{proof}
We run a modification of the ``Plain changes'' algorithm \cite{MR2251595}.  We take the posited reverse Gray code, $G$,  for permutations of $12\ldots (n-1)$, replicate each permutation $n$ times and insert the symbol $n$ in all possible $n$ positions starting with $12\ldots (n-1)n$ and sweeping $n$ from right to left, then left to right alternately, as occurs in the Trotter-Johnson Algorithm \cite{MR0159764,trotter:62}.  Because $G$ has property $P$ we will eventually reach the word
\[
n (n-2) (n-3) (n-4) (n-1) (n-5) \ldots 3 1 2
\]
At this point we finish the code as shown in Table~\ref{tab:1mod4}. 
\begin{table}[ht]
\[
\begin{array}{cccccccccc}
n & (n-2) & (n-3) & (n-4) & (n-1)  & (n-5) & \ldots & 3 & 1 & 2 \\
n & (n-2) & (n-3) & (n-1) & (n-4)  & (n-5) & \ldots & 3 & 1 & 2 \\
n & (n-2) & (n-1) & (n-3) & (n-4)  & (n-5) & \ldots & 3 & 1 & 2 \\
(n-2) & n &(n-1) & (n-3) & (n-4)  & (n-5) & \ldots & 3 & 1 & 2 \\
(n-2) & n &(n-3) & (n-1) & (n-4)  & (n-5) & \ldots & 3 & 1 & 2 \\
(n-2) & (n-3) & n & (n-1) & (n-4)  & (n-5) & \ldots & 3 & 1 & 2 \\
(n-2) & (n-3) & (n-1) & n & (n-4)  & (n-5) & \ldots & 3 & 1 & 2 \\
(n-2) & (n-3) & (n-1) & (n-4)  & n & (n-5) & \ldots & 3 & 1 & 2 \\
(n-2) & (n-3) & (n-1) & (n-4)  & (n-5) & n & \ldots & 3 & 1 & 2 \\
&&& \vdots &&&&&&\\
(n-2) & (n-3) & (n-1) & (n-4)  & (n-5) & \ldots & 3 & 1 & 2 & n\\
(n-2) & (n-1) & (n-3) & (n-4)  & (n-5) & \ldots & 3 & 1 & 2 & n\\
(n-1) & (n-2) & (n-3) & (n-4)  & (n-5) & \ldots & 3 & 1 & 2 & n\\
(n-1) & (n-2) & (n-3) & (n-4)  & (n-5) & \ldots & 3 & 1 & n & 2\\
(n-2) & (n-1) & (n-3) & (n-4)  & (n-5) & \ldots & 3 & 1 & n & 2\\
(n-2) & (n-1) & (n-3) & (n-4)  & (n-5) & \ldots & 3 & n & 1 & 2\\
(n-1) & (n-2) & (n-3) & (n-4)  & (n-5) & \ldots & 3 & n & 1 & 2\\
(n-1) & (n-2) & (n-3) & (n-4)  & (n-5) & \ldots & n & 3 & 1& 2\\
(n-2) & (n-1) & (n-3) & (n-4)  & (n-5) & \ldots & n & 3 & 1 & 2\\
&&& \vdots &&&&&&\\
(n-1) & (n-2) & (n-3) & n & (n-4)  & (n-5) & \ldots & 3 & 1 & 2 \\
(n-2) & (n-1) & (n-3) & n & (n-4)  & (n-5) & \ldots & 3 & 1 & 2 \\
(n-2) & (n-1) & n & (n-3) & (n-4)  & (n-5) & \ldots & 3 & 1 & 2 \\
(n-1) & (n-2) & n & (n-3) & (n-4)  & (n-5) & \ldots & 3 & 1 & 2 \\
(n-1) & n & (n-2) & (n-3) & (n-4)  & (n-5) & \ldots & 3 & 1 & 2 \\
n & (n-1) & (n-2) & (n-3) & (n-4)  & (n-5) & \ldots & 3 & 1 & 2 \\
\end{array}
\]
\caption{The end of the first half of the code in the case $n \equiv 1 \bmod 4$. \label{tab:1mod4}}
\end{table}
Then swap the $1$ and the $2$ to produce the reversal of the very first permutation and complete the code with the reversal of the first $n!/2$ permutations.
\end{proof}
Since the Trotter-Johnson algorithm produces reverse Gray codes for $n \leq 4$, the existence of all the desired Gray codes is established and Theorem~\ref{rev_perm} is a corollary of Theorems~\ref{0mod4} and~\ref{1mod4}.
Table~\ref{n5} displays a full reverse Gray code for permutations of order 5.
\begin{table}[ht]
\begin{center}
\begin{tabular}{llllllll}
  12345 & 12354 & 12534 & 15234 & 51234 & 51243 & 15243 & 12543 \\
  12453 & 12435 & 14235 & 14253 & 14523 & 15423 & 51423 & 54123 \\
  45123 & 41523 & 41253 & 41235 & 41325 & 41352 & 41532 & 45132 \\
  54132 & 51432 & 15432 & 14532 & 14352 & 14325 & 13425 & 13452 \\
  13542 & 15342 & 51342 & 51324 & 15324 & 13524 & 13254 & 13245 \\
  31245 & 31254 & 31524 & 35124 & 53124 & \multicolumn{3}{l}{\bf \small Now we stop running} \\
\multicolumn{8}{l}{\bf \small Plain Changes and start the Theorem~\ref{1mod4} modified order:}\\
  53142 & 53412 & 35412 & 35142 & 31542 & 31452 & 31425 & 34125 \\
  43125 & 43152 & 34152 & 34512 & 43512 & 45312 & 54312 &  \\
\multicolumn{8}{p{4.2in}}{\bf \small We are halfway. We finish with the permutations from the} \\
  \multicolumn{5}{l}{\bf \small first half in order, each reversed:}&          54321 & 45321 & 43521\\
  43251 & 43215 & 34215 & 34251 & 34521 & 35421 & 53421 & 53241 \\
  35241 & 32541 & 32451 & 32415 & 32145 & 32154 & 32514 & 35214 \\
  53214 & 52314 & 25314 & 23514 & 23154 & 23145 & 23415 & 23451 \\
  23541 & 25341 & 52341 & 52431 & 25431 & 24531 & 24351 & 24315 \\
  42315 & 42351 & 42531 & 45231 & 54231 & 54213 & 45213 & 42513 \\
  42153 & 42135 & 24135 & 21435 & 21453 & 24153 & 24513 & 25413 \\
  52413 & 52143 & 52134 & 25134 & 25143 & 21543 & 21534 & 21354 \\
  21345
\end{tabular}
\caption{A reverse Gray code for the permutations of order 5. \label{n5}}
\end{center}
\end{table}

We finish with a related but slightly off topic extension of the investigation of the graph, $\Gamma_n$, which was introduced in the proof of Theorem~\ref{0mod4}.  When $n$ is odd and we only take the $b=0$ layer, a Hamilton cycle in $\Gamma_n$ is a Gray code for all permutations of the multiset $\{1,2,3,3,\ldots,3\}$.  The only known Gray codes we could find for this combinatorial family were not cyclic \cite{MR1161985,MR0172816} so we feel it worth discussing.  For $n=3$ the Hamilton cycle is 
\[
(1,3),\, (2,3),\, (3,2),\, (3,1),\, (2,1),\, (1,2),\, 
\] 
For larger $n$ we use the recursion
\begin{align*}
& (1,3),\, (2,3),\; \mbox{\bf Follow the Hamilton cycle in $\Gamma_{n-2}'$ with } \\
&  \mbox{\bf $y'=y+1$ and $z'=z+1$. Continue: } (2,4),\, (1,4),\, (1,5), \\ 
& (1,6), \ldots, (1,n),\, (2,n), \ldots, (n-1,n),\, (n,n-1),\, (n,n-2), \\
& (n,n-3), \ldots, (n,1),\, (n-1,1), \ldots, (2,1),\, (1,2)
\end{align*}

An interesting  open question in reverse Gray codes for permutations is whether weakened reverse codes can be found for $n \equiv 2,3 \bmod 4$ in either of the two senses discussed in Section~~\ref{sec:qn}.

\def\cprime{$'$}

%\bibliographystyle{plain}
%\bibliography{publishers,journals,bibliography}

\def\cprime{$'$}
\begin{thebibliography}{10}

\bibitem{eades:84}
P.~Eades and B.~McKay.
\newblock An algorithm for generating subsets of a fixed size with a strong
  minimal change property.
\newblock {\em Inform. Process. Lett.}, 19:131--133, 1984.

\bibitem{MR0159764}
Selmer~M. Johnson.
\newblock Generation of permutations by adjacent transposition.
\newblock {\em Math. Comp.}, 17:282--285, 1963.

\bibitem{MR2251595}
Donald~E. Knuth.
\newblock {\em The art of computer programming. {V}ol. 4, {F}asc. 2}.
\newblock Addison-Wesley, Upper Saddle River, NJ, 2005.
\newblock Generating all tuples and permutations.

\bibitem{MR1161985}
Chun~Wa Ko and Frank Ruskey.
\newblock Generating permutations of a bag by interchanges.
\newblock {\em Inform. Process. Lett.}, 41(5):263--269, 1992.

\bibitem{MR0172816}
D.~H. Lehmer.
\newblock Permutation by adjacent interchanges.
\newblock {\em Amer. Math. Monthly}, 72(2, part II):36--46, 1965.

\bibitem{MR1288775}
Frank Ruskey.
\newblock Simple combinatorial {G}ray codes constructed by reversing sublists.
\newblock In {\em Algorithms and computation (Hong Kong, 1993)}, volume 762 of
  {\em Lecture Notes in Comput. Sci.}, pages 201--208. Springer, Berlin, 1993.

\bibitem{ruskey:pc:2008}
Frank Ruskey.
\newblock Personal communication, 2008.

\bibitem{savage:97}
C.~Savage.
\newblock A survey of combinatorial {G}ray codes.
\newblock {\em SIAM Rev.}, 39(4):605--629, 1997.

\bibitem{MR1329390}
Carla~D. Savage and Peter Winkler.
\newblock Monotone {G}ray codes and the middle levels problem.
\newblock {\em J. Combin. Theory Ser. A}, 70(2):230--248, 1995.

\bibitem{trotter:62}
H.F. Trotter.
\newblock {P}{E}{R}{M} ({A}lgorithm 115).
\newblock {\em Communications of the ACM}, 5:434--435, 1962.

\bibitem{MR1664228}
J.~H. van Lint.
\newblock {\em Introduction to coding theory}, volume~86 of {\em Graduate Texts
  in Mathematics}.
\newblock Springer-Verlag, Berlin, third edition, 1999.

\end{thebibliography}

\end{document}